\tikzstyle{vertex}=[circle, draw, inner sep=0pt, minimum size=4pt]
\newtheorem{theorem}{Theorem} %[section]
\newtheorem{lemma}[theorem]{Lemma}
\newtheorem{corollary}[theorem]{Corollary}
\theoremstyle{definition}
\newtheorem{definition}[theorem]{Definition}
\newtheorem{example}[theorem]{Example}
\theoremstyle{remark}
\newtheorem{remark}{Remark}
\newcommand{\sgn}{\mathrm{sgn}}
\newcommand{\drawpoints}[7]{ % 1-xl,2-xr,3-yl,4-yr,5-zl,6-zr,7-style
    \foreach \px in { #1,...,#2 } {
        \foreach \py in { #3,...,#4 } {
            \foreach \pz in { #5,...,#6 } {
                \filldraw[#7] (\px, \py, \pz) circle (0.7pt);
            };
        };
    };
}
\newcommand{\drawrectZ}[7][cycle]{ % [1-cycle],2-style,3-z,4-xl,5-xr,6-yl,7-yr,
    \draw [#2] (#4, #6, #3) -- (#4, #7, #3) -- (#5, #7, #3) -- (#5, #6, #3) -- #1;
}
\newcommand{\drawplaneZ}[5]{ % 1-style,2-xl,3-xr,4-y,5-z
    % horizontals
    \foreach \i in { 0,..., #4 } {
        \draw [-, #1] (#2, \i, #5) -- (#3, \i, #5);
    }
    % verticals
    \foreach \i in { #2,..., #3 } {
        \draw [-, #1] (\i, 0, #5) -- (\i, #4, #5);
    }
}
\newcommand{\drawplaneX}[5]{ % 1-style,2-x,3,4-yl,yr,5-z
    % horizontals
    \foreach \i in { 0,..., #5 } {
        \draw [#1] (#2, 0, \i) -- (#2, 4, \i);
    }
    % verticals
    \foreach \i in { #3,..., #4 } {
        \draw [#1] (#2, \i, 0) -- (#2, \i, #5);
    }
}
\newcommand{\showXZweak}[3]{ % 1-x, 2-z
    \foreach \pz in {#2, ..., -1} {
        \draw[->, #3] (#1,0,\pz) -- (#1, 0, \pz + 1);
        \draw[->, #3] (#1,0,\pz) -- (#1 - 1, 0, \pz + 1);
    }
}
\newcommand{\showAllXZ}[4]{ % 1-xl,2-xr,3-z,4-style
    \foreach \px in {#1, ..., #2}{
        \foreach \pz in {#3, ..., -1} {
            \draw[->, #4] (\px,0,\pz) -- (\px, 0, \pz + 1);
            \draw[->, #4] (\px,0,\pz) -- (\px - 1, 0, \pz + 1);
        }
    }
}
\newcommand{\picturenpath}{
\begin{tikzpicture}[x=1cm,y=1cm,z=0.5cm,rotate around y=40,rotate around x=0, rotate around z=0, scale = 0.7]\label{graph1}

    \draw[dashed, ->] (xyz cs:x=0) -- (xyz cs:x=5.5) node[above] {$x$};
    \draw[dashed, ->, very thin, lightgray] (xyz cs:y=0) -- (xyz cs:y=4.3) node[right] {$y$};
    \draw[->, dashed] (xyz cs:z=0) -- (xyz cs:z=-5) node[above] {$z$};
    \drawplaneZ{lightgray}{-1}{1}{4}{-2}
    \drawplaneZ{lightgray}{0}{3}{4}{-1}
    \drawplaneZ{lightgray}{1}{4}{4}{0}
    \showAllXZ{0}{4}{-4}{ultra thin, dashed, lightgray}
    %\drawpoints{0}{5}{0}{0}{0}{-4}{black}

    \node[above] at (1,4,-2){${B_1}$};
    \node[above] at (2,4,-1){${B_2}$};
    \node[above] at (3,4,-1){${B_3}$};
    \node[above] at (4,4,0){${B_4}$};
    \node[below] at (0,0,-4){${A_1}$};
    \node[below] at (1,0,-3){${A_2}$};
    \node[below] at (2,0,-3){${A_3}$};
    \node[below] at (3,0,-2){${A_4}$};
    % A_1
    \draw [blue, very thick] (0, 0, -4) -- (-1, 0, -3);
    \draw [blue, very thick] (-1, 0, -3) -- (-1, 0, -2) node[above, midway]{$t_3$};
    \draw [black, very thick] (-1, 0, -2) -- (-1, 1, -2);
    \draw [black, very thick] (-1, 1, -2) -- (-1, 1, -2);
    \draw [black, very thick] (-1, 1, -2) -- (0, 2, -2) node[left, pos=0.7]{$x_3$};
    \draw [black, very thick] (0, 2, -2) -- (0, 3, -2);
    \draw [->,black, very thick] (0, 3, -2) -- (1, 4, -2);
    % A_2
    \draw [blue, very thick] (1, 0, -3) -- (0, 0, -2);
    \draw [blue, very thick] (0, 0, -2) -- (0, 0, -1) node[above, midway]{$t_2$};
    \draw [black, very thick] (0, 0, -1) -- (0, 0, -1);   
    \draw [black, very thick] (0, 0, -1) -- (0, 1, -1);
    \draw [black, very thick] (0, 1, -1) -- (0, 2, -1);
    \draw [black, very thick] (0, 2, -1) -- (1, 3, -1) node[left, pos=0.7]{$x_2$};
    \draw [->,black, very thick] (1, 3, -1) -- (2, 4, -1) node[left, pos=0.7]{$x_1$};

    \draw [blue, very thick] (2, 0, -3) -- (2, 0, -2) node[above, midway]{$t_3$};
    \draw [blue, very thick] (2, 0, -2) -- (1, 0, -1);
    \draw [black, very thick] (1, 0, -1) -- (1, 1, -1);
    \draw [black, very thick] (1, 1, -1) -- (2, 2, -1) node[left, pos=0.7]{$x_3$};
    \draw [black, very thick] (2, 2, -1) -- (3, 3, -1) node[left, pos=0.7]{$x_2$};
    \draw [->,black, very thick] (3, 3, -1) -- (3, 4, -1);

    \draw [blue, very thick] (3, 0, -2) -- (2, 0, -1);
    \draw [blue, very thick] (2, 0, -1) -- (1, 0, 0);
    \draw [black, very thick] (1, 0, 0) -- (2, 1, 0) node[left, pos=0.7]{$x_4$};
    \draw [black, very thick] (2, 1, 0) -- (3, 2, 0) node[left, pos=0.7]{$x_3$};
    \draw [black, very thick] (3, 2, 0) -- (4, 3, 0) node[left, pos=0.7]{$x_2$};
    \draw [->,black, very thick] (4, 3, 0) -- (4, 4, 0);

    \end{tikzpicture}
}
\newcommand{\picturenpathy}{
\begin{tikzpicture}[x=1cm,y=1cm,z=0.5cm,rotate around x=-63,rotate around z=-24, scale = 0.4]\label{graph2}    
    \node [black] at (-0.5, 4.8, -0.5) {\large $\mu$};
    \draw[-, lightgray, dashed, thick, fill] (0, 4, 0) --
                            (0, 4, -2) --
                            (1, 4, -2) --
                            (1, 4, -1) --
                            (3, 4, -1) --
                            (3, 4, 0) --
                            cycle;

    \node [black] at (4, 0, -3.5) {\large $\lambda$};
    \draw[-, black, thick, dashed] (0, 0, -5) --
                            (1, 0, -5) --
                            (1, 0, -4) --
                            (3, 0, -4) --
                            (3, 0, -3) --
                            (4, 0, -3) --
                            (4, 0, 0);
    % The axes
    \draw[dashed, ->] (xyz cs:x=0) -- (xyz cs:x=6) node[above] {$x$};
    \draw[->, dashed] (xyz cs:z=0) -- (xyz cs:z=-6) node[right] {$z$};

    \filldraw[blue] (0,0,-4) circle (1pt) node[left, black] {${A_1}$};
    \filldraw[blue] (1,0,-3) circle (1pt) node[below, black] {${A_2}$};
    \filldraw[blue] (2,0,-3) circle (1pt) node[below, black] {${A_3}$};
    \filldraw[blue] (3,0,-2) circle (1pt) node[below, black] {${A_4}$};
    \filldraw[black] (1,4,-2) circle (1pt) node[above] {${B_1}$};
    \filldraw[black] (2,4,-1) circle (1pt) node[above] {${B_2}$};
    \filldraw[black] (3,4,-1) circle (1pt) node[above] {${B_3}$};
    \filldraw[black] (4,4,0) circle (1pt) node[above] {${B_4}$};
    
    \draw [blue, very thick] (0, 0, -4) -- (-1, 0, -3);
    \draw [blue, very thick] (-1, 0, -3) -- (-1, 0, -2);
    \draw [black, very thick] (-1, 0, -2) -- (-1, 1, -2);
    \draw [black, very thick] (-1, 1, -2) -- (-1, 1, -2);
    \draw [black, very thick] (-1, 1, -2) -- (0, 2, -2);
    \draw [black, very thick] (0, 2, -2) -- (0, 3, -2);
    \draw [->,black, very thick] (0, 3, -2) -- (1, 4, -2);

    \draw [blue, very thick] (1, 0, -3) -- (0, 0, -2);
    \draw [blue, very thick] (0, 0, -2) -- (0, 0, -1);
    \draw [black, very thick] (0, 0, -1) -- (0, 0, -1);   
    \draw [black, very thick] (0, 0, -1) -- (0, 1, -1);
    \draw [black, very thick] (0, 1, -1) -- (0, 2, -1);
    \draw [black, very thick] (0, 2, -1) -- (1, 3, -1);
    \draw [->,black, very thick] (1, 3, -1) -- (2, 4, -1);

    \draw [blue, very thick] (2, 0, -3) -- (2, 0, -2);
    \draw [blue, very thick] (2, 0, -2) -- (1, 0, -1);
    \draw [black, very thick] (1, 0, -1) -- (1, 1, -1);
    \draw [black, very thick] (1, 1, -1) -- (2, 2, -1);
    \draw [->, black, very thick] (2, 2, -1) -- (3, 3, -1);

    \draw [blue, very thick] (3, 0, -2) -- (2, 0, -1);
    \draw [blue, very thick] (2, 0, -1) -- (1, 0, 0);
    \draw [black, very thick] (1, 0, 0) -- (2, 1, 0);
    \draw [black, very thick] (2, 1, 0) -- (3, 2, 0);
    \draw [->, black, very thick] (3, 2, 0) -- (4, 3, 0);
    % \draw [->,black, very thick] (4, 3, 0) -- (4, 4, 0);

    \end{tikzpicture}
}
\newcommand{\picturegraph}{
\begin{tikzpicture}[x=1cm,y=1cm,z=0.5cm,rotate around y=-45,rotate around x=-0, scale = 0.8]\label{graph2}

    %\showXZweak{4}{-3}{very thin, lightgray, dashed}
    \showXZweak{3}{-3}{very thin, lightgray, dashed}
    \showXZweak{2}{-3}{very thin, lightgray, dashed}
    \showXZweak{1}{-3}{very thin, lightgray, dashed}
   
    \drawplaneZ{dashed, lightgray, thin}{0}{3}{3}{0};
    \draw [->, black, very thick] (2,2,0) -- (2, 3, 0);
    \draw [->, black, very thick] (2,2,0) -- (3, 3, 0);
    
    \drawplaneZ{gray, very thin}{0}{3}{3}{-2};
    \draw [->, black, very thick] (2,2,-2) -- (2, 3, -2);
    \draw [->, black, very thick] (2,2,-2) -- (3, 3, -2);

    % The axes
    \draw[->, dashed] (xyz cs:x=0) -- (xyz cs:x=4) node[above] {$x$};
    \draw[->, dashed] (xyz cs:y=0) -- (xyz cs:y=4) node[right] {$y$};
    \draw[->, dashed] (xyz cs:z=0) -- (xyz cs:z=-3.5) node[above] {$z$};

    \draw [->, blue, very thick] (3,0,-3) -- (3, 0, -2);
    \draw [->, blue, very thick] (3,0,-3) -- (2, 0, -2);

    \end{tikzpicture}
}
\newcommand{\drawprojections}[1]{
    \draw [#1] (1, 0, -1) -- (2, 1, -1) node[black, below, sloped, midway]{{$z = 1$}};
    \draw [#1] (2, 1, -1) -- (2, 2, -1);
    \draw [#1] (2, 2, -1) --  (3, 3, -1);
    \draw [#1] (3, 3, -1) --  (4, 4, -1);

    \draw [#1] (2, 0, -2) -- (3, 1, -2) node[black, below, sloped, midway]{{$z = 2$}};
    \draw [#1] (3, 1, -2) -- (3, 2, -2);
    \draw [#1] (3, 2, -2) --  (4, 3, -2);
    \draw [#1] (4, 3, -2) --  (4, 4, -2);
    
    \draw [#1] (3, 0, -3) -- (4, 1, -3) node[black, below, sloped, midway]{{$z = 3$}};
    \draw [#1] (4, 1, -3) -- (4, 2, -3);
    \draw [#1] (4, 2, -3) --  (4, 3, -3);
    \draw [#1] (4, 3, -3) --  (4, 4, -3);

    \draw [#1] (3, 0, -4) -- (4, 1, -4) node[black, below, sloped, midway]{{$z = 4$}};
    \draw [#1] (4, 1, -4) -- (4, 2, -4);
    \draw [#1] (4, 2, -4) --  (4, 3, -4);
    \draw [#1] (4, 3, -4) --  (4, 4, -4);

    \draw [#1] (4, 0, -5) -- (4, 1, -5) node[black, below, sloped, midway]{{$z = 5$}};
    \draw [#1] (4, 1, -5) -- (4, 2, -5);
    \draw [#1] (4, 2, -5) --  (4, 3, -5);
    \draw [#1] (4, 3, -5) --  (4, 4, -5);
}
\newcommand{\pictureproj}{
\begin{tikzpicture}[->,x=1cm,y=1.2cm,z=1cm,rotate around y=-65,rotate around x=0, scale=0.6]
    
    %\draw[->, dashed, ultra thin, lightgray] (xyz cs:x=0) -- (xyz cs:x=5) node[above] {$x$};
    \draw[->, dashed] (xyz cs:y=0) -- (xyz cs:y=5) node[right] {$y$};
    \draw[->, dashed] (xyz cs:z=0) -- (xyz cs:z=-6) node[above] {$z$};
    
    \filldraw[black] (3, 0, -4) circle (2pt) node[below] {$A_j$};
    \filldraw[black] (4, 4, 0) circle (2pt) node[above] {$B_i$};
    \filldraw[black] (4, 0, -5) circle (2pt) node[below] {$F_i$};
    \filldraw[black] (4, 4, -1) circle (2pt) node[right] {$T_{i,1}$};
    \draw [->, gray, dashed, very thin] (1,0,-1) -- (4, 0, -1);
    \draw [->, gray, dashed, very thin] (2,0,-2) -- (4, 0, -2);
    \draw [->, gray, dashed, very thin] (3,0,-3) -- (4, 0, -3);
    \draw [->, gray, dashed, very thin] (3,0,-4) -- (4, 0, -4);

    \drawplaneZ{dashed, gray, very thin}{0}{4}{4}{0};
    \drawplaneX{-, red, dashed, very thin}{4}{0}{4}{-5};
    \filldraw[black] (4, 4, -2) circle (1pt);
    \filldraw[black] (4, 4, -3) circle (1pt);
    \filldraw[black] (4, 4, -4) circle (1pt);
    \filldraw[black] (4, 4, -5) circle (1pt);

    \draw [-, blue, very thick] (3, 0, -4) --  (3, 0, -3) node[pos=0.8, below]{$t_4$};
    \draw [-, blue, very thick] (3, 0, -3) --  (2, 0, -2);
    \draw [-, blue, very thick] (2, 0, -2) --  (1, 0, -1);
    \draw [-, blue, very thick] (1, 0, -1) --  (1, 0, 0) node[pos=0.8, below]{$t_1$};
    \draw [-, gray, very thick] (1, 0, 0) -- (2, 1, 0) node[black, below, sloped, midway]{{z = 0}} node[black, left, pos=0.6]{$x_4$};
    \draw [-, gray, very thick] (2, 1, 0) --  (2, 2, 0);
    \draw [-, gray, very thick] (2, 2, 0) --  (3, 3, 0) node[black, left, pos=0.6]{$x_2$};
    \draw [->, gray, very thick] (3, 3, 0) --  (4, 4, 0) node[black, left, pos=0.6]{$x_1$};

    \drawprojections{-, black, thick};
    
    \draw [blue, dashed, thick] (4, 0, -5) -- (3, 0, -4);
    
    \draw [-, red, very thick]  (4, 4, 0) -- (4, 4, -1); % 0-1
    \draw [-, red, very thick] (4, 4, -1) -- (4, 3, -2); % 1-2
    \draw [-, red, very thick] (4, 3, -2) -- (4, 2, -3); % 2-3
    % \draw [<-, orange, very thick] (4, 2, -3) -- (4, 1, -3); 
    \draw [-, red, very thick] (4, 1, -3) -- (4, 1, -4); % 3-4
    \draw [-, red, very thick] (4, 1, -4) -- (4, 0, -5); % 4-5

    \draw [<-, dashed, lightgray, very thick] (4,1,-3) -- (4.5, 1, -4) node[right, black]{$T_{i,3}$};
    \draw [<-, dashed, lightgray, very thick] (4,2,-3) -- (4.5, 2, -4) node[right, black]{$T'_{i,3}$};
    \draw [<-, dashed, lightgray, very thick] (4,3,-2) -- (4.5, 3, -4) node[right, black]{$T_{i,2}$};
    % \node [anchor=south west] at (4,1,-3) {$T_{i,3}$};
    % \node [anchor=south west] at (4,2,-3) {$T'_{i,3}$};

    \draw [black, dashed, thin] (3,3,0)
                                -- (3,3,-1)
                                -- (4,3,-2);
    \draw [black, dashed, thin] (2,2,0)
                                -- (2,2,-1)
                                -- (3,2,-2)
                                -- (4,2,-3);
    \draw [black, dashed, thin] (2,1,0)
                                -- (2,1,-1)
                                -- (3,1,-2)
                                -- (4,1,-3);

\end{tikzpicture}
}
\newcommand{\drawprojectionss}[1]{
%        \draw [#1] (1, 0, -1) -- (2, 1, -1) node[black, below, sloped, midway]{{$z = 1$}};
%        \draw [#1] (2, 1, -1) -- (2, 2, -1);
 %       \draw [#1] (2, 2, -1) --  (3, 3, -1);
%        \draw [#1] (3, 3, -1) --  (4, 4, -1);
    
        \draw [#1] (2, 0, -2) -- (3, 1, -2) node[black, below, sloped, midway]{{$z = k$}};
        \draw [#1] (3, 1, -2) -- (3, 2, -2);
        \draw [#1] (3, 2, -2) --  (4, 3, -2);
        \draw [#1] (4, 3, -2) --  (4, 4, -2);
        
%        \draw [#1] (3, 0, -3) -- (4, 1, -3) node[black, below, sloped, midway]{{$z = 3$}};
%        \draw [#1] (4, 1, -3) -- (4, 2, -3);
%        \draw [#1] (4, 2, -3) --  (4, 3, -3);
%        \draw [#1] (4, 3, -3) --  (4, 4, -3);
    
%        \draw [#1] (3, 0, -4) -- (4, 1, -4) node[black, below, sloped, midway]{{$z = 4$}};
%        \draw [#1] (4, 1, -4) -- (4, 2, -4);
%        \draw [#1] (4, 2, -4) --  (4, 3, -4);
%        \draw [#1] (4, 3, -4) --  (4, 4, -4);
    
%        \draw [#1] (4, 0, -5) -- (4, 1, -5) node[black, below, sloped, midway]{{$z = 5$}};
%        \draw [#1] (4, 1, -5) -- (4, 2, -5);
%        \draw [#1] (4, 2, -5) --  (4, 3, -5);
%        \draw [#1] (4, 3, -5) --  (4, 4, -5);
    }
\newcommand{\pictureprojj}{
% [inline block 0: 21 envs, 61310 chars -> data_tex | \begin{tikzpicture}[->,x=1cm,y=1.2cm,z=1cm,rotate around y=-65,rotate around x=0, scale = 0.6]     ...]

}
\newcommand{\pictureInvolutionSetup} {
	\draw[->, dashed, lightgray] (xyz cs:x=-3) -- (xyz cs:x=4.3) node[above] {$x$};
    \draw[->, dashed, lightgray] (xyz cs:y=0) -- (xyz cs:y=4.3) node[right] {$y$};
    \draw[->, dashed, lightgray] (xyz cs:z=0) -- (xyz cs:z=-3) node[above] {$z$};
    \showAllXZ{0}{4}{-3}{ultra thin, dashed, lightgray};
    \filldraw[black] (2,4,-1) circle (1pt) node[above] {$B_1$};
    \filldraw[black] (3,4,0) circle (1pt) node[above] {$B_2$};
    \filldraw[black] (4,4,0) circle (1pt) node[above] {$B_3$};
    \filldraw[black] (1,0,-3) circle (1pt) node[below] {$A_1$};
    \filldraw[black] (2,0,-3) circle (1pt) node[below] {$A_2$};
    \filldraw[black] (3,0,-2) circle (1pt) node[below] {$A_3$};
}
\newcommand{\pictureInvolutionInit}{
\begin{tikzpicture}[x=1.2cm,y=1cm,z=0.5cm,rotate around y=33,rotate around x=0]\label{slide1}

    \pictureInvolutionSetup
    \drawrectZ{gray, thick}{-1}{-1}{4}{0}{4}
    \drawrectZ{gray, very thin}{0}{-1}{4}{0}{4};
    % path 1
    \draw [->, blue, very thick] (1, 0, -3) -- (0, 0, -2);
    \draw [->, blue, very thick] (0, 0, -2) -- (0, 0, -1);
    \draw [->, blue, thin] (0, 0, -1) -- (-1, 0, 0);
    
    \draw [->, black,  thin] (-1, 0, 0) -- (0, 1, 0);
    \draw [->, black,  thin] (0, 1, 0) -- (1, 2, 0);
    \draw [->, black,  thin] (1, 2, 0) -- (2, 3, 0);
    \draw [->, black,  thin] (2, 3, 0) -- (3, 4, 0);
    
    % path 2
    \draw [->, blue, very thick] (2, 0, -3) -- (2, 0, -2);
    \draw [->, blue, very thick] (2, 0, -2) -- (1, 0, -1);
    \draw [->, blue, thin] (1, 0, -1) -- (1, 0, 0);
    
    \draw [->, black, thin] (1, 0, 0) -- (2,1,0);
    \draw [->, black, thin] (2,1, 0) -- (2,2,0);
    \draw [->, black, thin] (2,2, 0) -- (3,3,0);
    \draw [->, black, thin] (3,3, 0) -- (4,4,0);

    % path 3
    \draw [->, blue, very thick] (3, 0, -2) -- (2, 0, -1);
    \draw [->, black, very thick] (2, 0, -1) -- (2, 1, -1);
    \draw [->, black, very thick] (2, 1, -1) -- (2, 2, -1);
    \draw [->, black, very thick] (2, 2, -1) -- (2, 3, -1);
    \draw [->, black, very thick] (2, 3, -1) -- (2, 4, -1);

    \end{tikzpicture}
}
\newcommand{\pictureInvolutionSlide}{
\begin{tikzpicture}[x=1.2cm,y=1cm,z=0.5cm,rotate around y=33,rotate around x=0]\label{slide1}

    \pictureInvolutionSetup

    \drawrectZ{orange, very thick}{0}{-1}{4}{0}{4};
    \drawrectZ{gray, thick}{-1}{-1}{4}{0}{4}
    % path 1
    \draw [->, blue, very thick] (1, 0, -3) -- (0, 0, -2);
    \draw [->, blue, very thick] (0, 0, -2) -- (0, 0, -1);
    \draw [->, blue, very thick] (0, 0, -1) -- (0, 0, 0);
    \draw [->, blue, dashed, thin] (0, 0, -1) -- (-1, 0, 0);
    
    \draw [->, black,  thick] (-0.02, 0, 0) -- (0.98, 1, 0);
    \draw [->, black,  thick] (0.95, 1, 0) -- (1, 2, 0);
    \draw [->, black, dashed,  thin] (-1, 0, 0) -- (0, 1, 0);
    \draw [->, black, dashed,  thin] (0, 1, 0) -- (1, 2, 0);
    \draw [->, black, thick] (1, 2, 0) -- (2, 3, 0);
    \draw [->, black, thick] (2, 3, 0) -- (3, 4, 0);
    
    % path 2
    \draw [->, blue, very thick] (2, 0, -3) -- (2, 0, -2);
    \draw [->, blue, very thick] (2, 0, -2) -- (1, 0, -1);
    \draw [->, blue, very thick] (1, 0, -1) -- (0,0,0);
    \draw [->, blue, dashed, thin] (1, 0, -1) -- (1, 0, 0);
    \draw [->, blue, dashed, thin] (1, 0, -1) -- (1, 0, 0);
    
    \draw [->, black,  thick] (0.02, 0, 0) -- (1.02, 1, 0);
    \draw [->, black,  thick] (1.05, 1, 0) -- (2, 2, 0);
    \draw [->, black, dashed, thin] (1, 0, 0) -- (2,1,0);
    \draw [->, black, dashed, thin] (2,1, 0) -- (2,2,0);
    \draw [->, black, thick] (2,2, 0) -- (3,3,0);
    \draw [->, black, thick] (3,3, 0) -- (4,4,0);

    \end{tikzpicture}
}
\newcommand{\pictureInvolutionProject}{
\begin{tikzpicture}[x=1.2cm,y=1cm,z=0.5cm,rotate around y=33,rotate around x=0]\label{slide1}
    \pictureInvolutionSetup
	\drawrectZ{gray, very thin}{0}{-1}{4}{0}{4};
	\drawrectZ{orange, very thick}{-1}{-1}{4}{0}{4}
	
    % path 1
    \draw [->, blue, very thick] (1, 0, -3) -- (0, 0, -2);
    \draw [->, blue, very thick] (0, 0, -2) -- (0, 0, -1);
    \draw [->, blue, thin] (0, 0, -1) -- (0, 0, 0);
    
    \draw [->, black,  thick] (0, 0, -1) -- (1, 1, -1);
    \draw [->, black,  thick] (1, 1, -1) -- (1, 2, -1);
    \draw [->, black, thick] (1, 2, -1) -- (2, 3, -1);
    \draw [->, black, thick] (2, 3, -1) -- (3, 4, -1);
    
    \draw[->, orange, thick] (3,4,-1) -- (3,4,0);
    
    % path 2
    \draw [->, blue, very thick] (2, 0, -3) -- (2, 0, -2);
    \draw [->, blue, very thick] (2, 0, -2) -- (1, 0, -1);
    \draw [->, blue, thin] (1, 0, -1) -- (0,0,0);
    
    \draw [->, black,  thick] (1, 0, -1) -- (2, 1, -1);
    \draw [->, black,  thick] (2, 1, -1) -- (3, 2, -1);
    \draw [->, black, thick] (3,2, -1) -- (4,3,-1);
    \draw [->, black, thick] (4,3, -1) -- (4,4,-1);
    \draw [->, orange, thick] (4,3, -1) -- (4,4,0);

    % path 3
    \draw [->, blue, very thick] (3, 0, -2) -- (2, 0, -1);
    \draw [->, black, very thick] (2, 0, -1) -- (2, 1, -1);
    \draw [->, black, very thick] (2, 1, -1) -- (2, 2, -1);
    \draw [->, black, very thick] (2, 2, -1) -- (2, 3, -1);
    \draw [->, black, very thick] (2, 3, -1) -- (2, 4, -1);
    
    \filldraw[orange] (2,1,-1) circle (2pt) node[right] {$C$};
    \end{tikzpicture}
}
\newcommand{\pictureInvolutionTranspose}{
\begin{tikzpicture}[x=1.2cm,y=1cm,z=0.5cm,rotate around y=33,rotate around x=0]\label{slide1}
    \pictureInvolutionSetup
	\drawrectZ{gray, very thin}{0}{-1}{4}{0}{4};
	\drawrectZ{orange, very thick}{-1}{-1}{4}{0}{4}
	    
    % path 2
    \draw [->, blue, very thick] (2, 0, -3) -- (2, 0, -2);
    \draw [->, blue, very thick] (2, 0, -2) -- (1, 0, -1);

	\draw [->, black,  very thick] (1, 0, -1) -- (2, 1, -1);
    \draw [->, black, very thick] (2, 1, -1) -- (2, 2, -1);
    \draw [->, black, very thick] (2, 2, -1) -- (2, 3, -1);
    \draw [->, black, very thick] (2, 3, -1) -- (2, 4, -1);
    
    \draw [-, blue, thin, dashed] (1, 0, -1) -- (0,0,0);
    \draw [-, black, thin, dashed] (0, 0, 0) -- (1, 1, 0);

    % path 3
    \draw [->, blue, very thick] (3, 0, -2) -- (2, 0, -1);
    \draw [->, blue, thin] (2, 0, -1) -- (1,0,0);

    \draw [->, black, thin] (1, 0, 0) -- (1.05, 1, 0);
    \draw [->, black,  thin] (1.05, 1, 0) -- (2, 2, 0);
    \draw [->, black, thin] (2,2, 0) -- (3,3,0);
    \draw [->, black, thin] (3,3, 0) -- (4,4,0);
    \draw [->, black, thin] (3,3, 0) -- (4,4,0);
    
    \draw [-, black, dashed,  very thick] (2, 0, -1) -- (2, 1, -1);
    \draw [-, black, dashed, very thick] (2, 1, -1) -- (3, 2, -1);
    \draw [-, black, dashed, very thick] (3,2, -1) -- (4,3,-1);
    \draw [-, black, dashed, very thick] (4,3, -1) -- (4,4,-1);

    \end{tikzpicture}
}
\newcommand{\pictureInvolutionRevslide}{
\begin{tikzpicture}[x=1.2cm,y=1cm,z=0.5cm,rotate around y=33,rotate around x=0]\label{slide1}
    \pictureInvolutionSetup
	\drawrectZ{orange, very thick}{0}{-1}{4}{0}{4};
	\drawrectZ{gray, very thin}{-1}{-1}{4}{0}{4}
	
    % path 1
    \draw [->, blue, very thick] (1, 0, -3) -- (0, 0, -2);
    \draw [->, blue, very thick] (0, 0, -2) -- (0, 0, -1);
    \draw [->, blue, dashed, thick] (0, 0, -1) -- (0, 0, 0);
    \draw [->, black, dashed, thick] (0, 0, 0) -- (0.95, 1, 0);
    \draw [->, black, dashed, thick] (0.95, 1, 0) -- (1, 2, 0);
    \draw [->, black, very thick] (1, 2, 0) -- (2, 3, 0);
    \draw [->, black, very thick] (2, 3, 0) -- (3, 4, 0);
    
    \draw[->,blue, very thick] (0, 0, -1) -- (-1, 0, 0);
    \draw[->,black, very thick] (-1, 0, 0) -- (0, 1, 0);
    \draw[->,black, very thick] (0, 1, 0) -- (1, 2, 0);

    % path 3
    \draw [->, blue, very thick] (3, 0, -2) -- (2, 0, -1);
    \draw [->, blue, thick, dashed] (2, 0, -1) -- (1,0,0);

    \draw [->, black, thick, dashed] (1, 0, 0) -- (1.05, 1, 0);
    \draw [->, black, thick, dashed] (1.05, 1, 0) -- (2, 2, 0);
    \draw [->, black, very thick] (2,2, 0) -- (3,3,0);
    \draw [->, black, very thick] (3,3, 0) -- (4,4,0);
    \draw [->, black, very thick] (3,3, 0) -- (4,4,0);

    \draw[->,blue, very thick] (2, 0, -1) -- (2, 0, 0);
    \draw[->,black, very thick] (2, 0, 0) -- (2, 1, 0);
    \draw[->,black, very thick] (2, 1, 0) -- (2, 2, 0);
    
    \end{tikzpicture}
}
\newcommand{\pictureInvolutionResult}{
\begin{tikzpicture}[x=1.2cm,y=1cm,z=0.5cm,rotate around y=33,rotate around x=0]\label{slide1}
    \pictureInvolutionSetup
	\drawrectZ{gray, very thin}{0}{-1}{4}{0}{4};
	\drawrectZ{gray, thick}{-1}{-1}{4}{0}{4}
	
    % path 1
    \draw [->, blue, very thick] (1, 0, -3) -- (0, 0, -2);
    \draw [->, blue, very thick] (0, 0, -2) -- (0, 0, -1);
    \draw[->,blue, thin] (0, 0, -1) -- (-1, 0, 0);
    \draw[->,black, thin] (-1, 0, 0) -- (0, 1, 0);
    \draw[->,black, thin] (0, 1, 0) -- (1, 2, 0);   
    \draw [->, black, thin] (1, 2, 0) -- (2, 3, 0);
    \draw [->, black, thin] (2, 3, 0) -- (3, 4, 0);
    
    % path 2
    \draw [->, blue, very thick] (2, 0, -3) -- (2, 0, -2);
    \draw [->, blue, very thick] (2, 0, -2) -- (1, 0, -1);

	\draw [->, black,  very thick] (1, 0, -1) -- (2, 1, -1);
    \draw [->, black, very thick] (2, 1, -1) -- (2, 2, -1);
    \draw [->, black, very thick] (2, 2, -1) -- (2, 3, -1);
    \draw [->, black, very thick] (2, 3, -1) -- (2, 4, -1);

    % path 3
    \draw [->, blue, very thick] (3, 0, -2) -- (2, 0, -1);
    \draw[->,blue, thick] (2, 0, -1) -- (2, 0, 0);
    \draw[->,black, thick] (2, 0, 0) -- (2, 1, 0);
    \draw[->,black, thick] (2, 1, 0) -- (2, 2, 0);
    \draw [->, black, thin] (2,2, 0) -- (3,3,0);
    \draw [->, black, thin] (3,3, 0) -- (4,4,0);
    \draw [->, black, thin] (3,3, 0) -- (4,4,0);

    \end{tikzpicture}
}
\title[Determinantal formulas for dual Grothendieck polynomials]{
Determinantal formulas for dual Grothendieck polynomials  
} 
\author[Alimzhan Amanov \and Damir Yeliussizov]{Alimzhan Amanov \and Damir Yeliussizov}
\address{Kazakh-British Technical University, Almaty, Kazakhstan}
\email{\href{mailto:alimzhan.amanov@gmail.com}{alimzhan.amanov@gmail.com}, \href{mailto:yeldamir@gmail.com}{yeldamir@gmail.com}}
\begin{document}

\begin{abstract}
We prove Jacobi--Trudi-type determinantal formulas for skew dual %(stable) 
Grothendieck polynomials which are $K$-theoretic deformations of Schur polynomials. We also obtain a bialternant-type formula analogous to the classical definition of Schur polynomials.
\end{abstract}

\maketitle

%\tableofcontents

\section{Introduction}
The {\it dual} (stable) {\it Grothendieck polynomials} 
are certain $K$-theoretic inhomogeneous deformations of Schur polynomials, 
introduced by Lam and Pylyavskyy in \cite{lp} (with earlier implicit relations in \cite{lenart, buch}, as a dual basis to stable Grothendieck polynomials \cite{fk}).  
Besides interesting combinatorial properties \cite{lp, ggl, dy, dy2, dy5}, these functions have connections with some natural probabilistic models \cite{dy3, dy6, dy4}, and hence they arise in contexts beyond $K$-theoretic Schubert calculus. 

%\vspace{0.2em}

While various aspects of these %dual Grothendieck  
polynomials are developed well (e.g. in the  references above), 
%compared with the plethora of results known about Schur polynomials, 
their theory can still be viewed as %incomplete. 
somewhat fragmented. 
One gap
concerned problems about determinantal formulas (for skew shapes), which are of fundamental importance %in combinatorial, $K$-theoretic, and probabilistic applications 
due to %$K$-theoretic, combinatorial, and probabilistic 
connections mentioned above (such formulas are key for computational reasons and as a tool for further analysis). 

%\vspace{0.2em} 

In this paper, we prove several determinantal identities for the dual Grothendieck polynomials $g_{\lambda/\mu}$ (see def.~\ref{defg}). Let us state one of our main results.

\begin{theorem}[Dual Jacobi--Trudi-type formula]\label{main}
Let $n \ge \lambda_1, \mu_1$. The following determinantal identity holds 
%\footnote{ Here $\mathbf{x} = (x_1, x_2, \ldots)$; $e_n$ is the elementary symmetric function; $\lambda'$ is the conjugate partition of $\lambda$; and $1^k := (1, \ldots, 1)$ repeated $k$ times which is defined as $\varnothing$ for $k \le 0$.}
\begin{align}\label{one}
g_{\lambda/\mu}(x_1, x_2,\ldots) %= \det\left[h_{\lambda_i - i -\mu_j + j}(1^{i-1}, \mathbf{x})\right]_{1 \le i, j \le \ell(\lambda)} 
= \det\left[e_{\lambda'_i - i -\mu'_j + j}(1^{\lambda'_i-1 - \mu'_j}, x_1, x_2, \ldots )\right]_{1 \le i, j \le n}.
\end{align}
Here %$\mathbf{x} = (x_1, x_2, \ldots)$; 
$\{e_n\}$ are the elementary symmetric functions; $\lambda'$ is the conjugate partition of $\lambda$; and $(1^k) := (1, \ldots, 1)$ repeated $k$ times which is defined as $\varnothing$ for $k \le 0$.
\end{theorem}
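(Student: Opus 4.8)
The plan is to prove \eqref{one} by the Lindström--Gessel--Viennot (LGV) method in a suitably decorated lattice, reading the determinant as a signed generating function for families of paths and matching the survivors with the objects that define $g_{\lambda/\mu}$. I would begin from the combinatorial description of $g_{\lambda/\mu}$ in Definition~\ref{defg}, namely a weighted sum over reverse plane partitions $T$ of shape $\lambda/\mu$ in which each variable $x_i$ is recorded once per \emph{column} containing the value $i$ rather than once per cell. This column-counting weight is exactly the inhomogeneous $K$-theoretic feature separating $g_{\lambda/\mu}$ from the Schur function $s_{\lambda/\mu}$, and it is precisely what the inserted $1$'s in \eqref{one} must reproduce.

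The second step is to realize each matrix entry as a one-path generating function. Using
$$e_{n}(1^{k}, x_1, x_2, \ldots) \;=\; \sum_{p \ge 0} \binom{k}{\,n-p\,}\, e_{p}(x_1, x_2, \ldots),$$
the $(i,j)$ entry counts a single directed path from a source $A_j$ to a sink $B_i$ that takes ordinary $x$-weighted steps together with at most $k = \lambda'_i - 1 - \mu'_j$ \emph{free} (weight-$1$) steps: the honest $x$-steps contribute the factor $e_p(x)$ and the free steps contribute the binomial coefficient. This is exactly the three-dimensional path in the figures, where the $x$-labeled segments carry the variables while the $t$-labeled free segments, whose number is capped by the $z$-extent, carry the repetitions. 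I would fix the sources $A_1, \dots, A_n$ and sinks $B_1, \dots, B_n$ so that a single $A_j \to B_i$ path has generating function equal to the $(i,j)$ entry, and so that the endpoint geometry forces every vertex-disjoint family to realize the identity permutation with sign $+1$.

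With this model, LGV expands the right-hand side of \eqref{one} as a signed sum over path families $(P_1, \dots, P_n)$ with $P_j \colon A_j \to B_{\sigma(j)}$, and it remains to cancel the families that fail to be vertex-disjoint and to biject the survivors with reverse plane partitions. I expect the main obstacle to be that the free steps make this cancellation genuinely nonstandard: two paths may interact through their weight-$1$ segments without crossing transversally, so the naive ``swap tails at the first intersection'' involution does not preserve the free-step structure. Overcoming this is the content of the sequence of local moves drawn in the figures --- a slide, a projection, a transpose, and a reverse slide --- which together should define a sign-reversing involution with no fixed points among the non-disjoint families while keeping the free-step data consistent across adjacent $z$-levels. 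Once that involution is shown to be well defined, the final task is the weight-preserving bijection between vertex-disjoint families and reverse plane partitions of shape $\lambda/\mu$: the $y$-steps read off the weakly increasing column entries, and the free $z$-steps encode exactly the repeated values that the column weight collapses, with the bound $\lambda'_i - 1 - \mu'_j$ matching the maximal number of repetitions available in a column. Summing the weights of the survivors then yields $\sum_T \mathrm{wt}(T) = g_{\lambda/\mu}$, establishing \eqref{one}.
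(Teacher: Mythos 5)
Your lattice model and your reading of the matrix entries are essentially the paper's: the expansion $e_{n}(1^{k},\mathbf{x}) = \sum_{p} \binom{k}{n-p}\, e_{p}(\mathbf{x})$ is exactly what makes the $(i,j)$ entry a single-path generating function in a 3d lattice with weight-one ``free'' steps (this is the paper's Lemma, in the specialization $t_i = 1$ of its refined version). But there is a genuine gap at the heart of your plan: the claim that \emph{the endpoint geometry forces every vertex-disjoint family to realize the identity permutation with sign $+1$} is false in this three-dimensional lattice, and no placement of sources and sinks compatible with these entries is known to make it true for skew shapes. The paper says so explicitly right after invoking LGV: the signed sum over nonintersecting systems ``may contain terms with negative signs since we have a 3d lattice and there are many nonintersecting path systems corresponding to non-identity permutations.'' In three dimensions, paths have room to avoid one another while still permuting the endpoints nontrivially; only in the straight-shape case $\mu = \varnothing$, where all sinks lie on the single plane $z = 0$, does disjointness force the identity permutation, which is why a planar argument sufficed in \cite{dy} but cannot be the whole story here.

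Because of this, you have also misassigned the role of the slide/projection/transpose/reverse-slide operations. Cancelling the families that fail to be vertex-disjoint is \emph{not} the nonstandard part and needs none of that machinery: the directed graph is acyclic, so the classical LGV tail-swap applies verbatim (it merely exchanges edge sets, so the free steps cause no difficulty), and it immediately reduces the determinant to a signed sum over nonintersecting systems. The hard content of the paper is a second, non-local, sign-reversing and weight-preserving involution $\phi = s_k^{-1}\cdot \mathsf{transpose}_{k+1}\cdot s_k$ defined \emph{on the nonintersecting systems themselves}, built from $\mathsf{slide}_k$, $\mathsf{transpose}_{k+1}$ and $\mathsf{rslide}_k$, which pairs off the negatively-signed nonintersecting systems; its fixed points (the ``good'' systems, all realizing the identity permutation) are then bijected with reverse plane partitions. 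As written, your proposal stops one step short of the actual difficulty: after LGV you would hold a signed sum over vertex-disjoint families, your positivity claim for them fails, and the involution you invoke is aimed at families that the tail-swap has already cancelled.
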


This formula is of dual Jacobi--Trudi type or an analogue of the N\"agelsbach--Kostka formula %\footnote{ Which is usually referred to as the dual Jacobi-Trudi identity.} 
for Schur functions, see e.g. \cite{macdonald, sta}. 
%We give a combinatorial proof of the first formula and then pass to the second one via a duality automorphism which exists for dual Grothendieck polynomials. 
In a more general refined form, which we also prove in Sec.~\ref{proofmain}, this identity was conjectured by Grinberg \cite{grinberg}. In the straight shape case, %$\mu = \varnothing$, 
Jacobi--Trudi-type formulas for dual Grothendieck polynomials %$g_{\lambda}$ 
were first given in \cite{sz}; the identity \eqref{one} for $g_{\lambda}$ was proved in \cite{ln} via specializations of Schubert polynomials; in refined form this identity was proved combinatorially in \cite{dy}.  %; we also address this in the final section. 
 %(unpublished work). 

%\vspace{0.2em}

We also prove a dual formula (Theorem~\ref{dual}) written via the complete homogeneous symmetric functions $\{h_n\}$, and derive a bialternant-type formula (Theorem~\ref{bialt}) which is analogous to the classical definition of Schur polynomials. 

%\vspace{0.2em}

Combinatorial proofs of such identities always rely on the Lindstr\"om--Gessel--Viennot (LGV) lemma \cite{lgv} via lattice paths. But perhaps the most surprising aspect of our proof of Theorem~\ref{main} is a 3-dimensional lattice construction. %There are technical difficulties in 
To establish positivity, after the LGV lemma this system needs a non-local sign-reversing involution which is the most difficult part of the proof. %modification of the LGV method using certain path transformations and one of the most difficult parts of the proof is in describing the sign-reversing involution. 
Even though the operations which we describe look somewhat technical, they are essentially like {\it jeu de taquin} type operations on tableaux.  
See Sec.~\ref{proofmain} for details. 

%\begin{remark}
We also note that a different and independent proof of this result was given by J.~S.~Kim \cite{jskim} almost the same time as our paper was written. An independent proof of the dual formula in Theorem~\ref{dual} appeared in S.~Iwao's work \cite{iwao} using fermionic techniques. 
%\end{remark}

\section{Preliminaries}
%\subsection{Partitions and diagrams} 
A {\it partition} is a sequence $\lambda = (\lambda_{1}, \ldots, \lambda_{\ell})$ of positive integers $\lambda_1 \ge \cdots \ge \lambda_{\ell}$, where $\ell(\lambda) = \ell$ is the {\it length} of $\lambda$ (we set $\lambda_i = 0$ for $i > \ell$). Every partition $\lambda$ can be represented as the {\it Young diagram} $\{(i,j)  : i \in [1, \ell], j\in [1,\lambda_{i}]\}$. We denote by $\lambda'$ the {\it conjugate} partition of $\lambda$ whose diagram is transposed. We draw diagrams in English notation. For $\lambda \supset \mu$, the {\it skew shape} $\lambda/\mu$ has the diagram of $\mu$ removed from the diagram of $\lambda$.

%\subsection{Dual Grothendieck polynomials}
A {\it reverse plane partition} (RPP) of shape $\lambda/\mu$ is a filling of the boxes of the diagram of $\lambda/\mu$ with positive integers weakly increasing along rows from left to right and along columns from top to bottom. Let $\mathrm{RPP}(\lambda/\mu)$ be the set of RPPs of shape $\lambda/\mu$.

\begin{definition}[\cite{lp}]\label{defg}
The {\it skew dual Grothendieck polynomials} $g_{\lambda/\mu}$ are defined as follows:
$$
g_{\lambda/\mu}(\mathbf{x}) := \sum_{T\, \in\, \mathrm{RPP}(\lambda/\mu)} \prod_{i \ge 1} x_i^{c_i(T)},
$$
where $c_i(T)$ is the number of columns of $T$ containing the entry $i$, and $\mathbf{x} = (x_1, x_2, \ldots)$.
\end{definition}

It is known that $g_{\lambda/\mu}$ is a symmetric function \cite{lp}, which is inhomogeneous and its top degree component is the Schur polynomial $s_{\lambda/\mu}$.

\section{Proof of Theorem~\ref{main}}\label{proofmain}
\subsection{Refined version} We will prove a more general version of Theorem~\ref{main} for {\it refined} dual Grothendieck polynomials $g_{\lambda/\mu}(\mathbf{t}; \mathbf{x})$ introduced in \cite{ggl} which contain  the extra parameters $\mathbf{t} = (t_1, t_2, \ldots)$ so that $g_{\lambda/\mu}(\mathbf{1}; \mathbf{x}) = g_{\lambda/\mu}(\mathbf{x})$ (i.e. when $t_i = 1$ for all $i$). We define these formal power series as follows
$$
g_{\lambda/\mu}(\mathbf{t}; \mathbf{x}) := \sum_{T \in \mathrm{RPP}(\lambda/\mu)} \prod_{i \ge 1} x_i^{c_i(T)} \prod_{j \ge 1} t_j^{d_j(T)},
$$
where $d_j(T)$ is the number of entries in $j$-th row of $T$ that are equal to an entry directly below. The following generalization of Theorem~\ref{main} %which we are going to prove in this section 
was conjectured by Grinberg \cite{grinberg}. 
\begin{theorem}
Let $n \ge \lambda_1, \mu_1$. The following determinantal identity holds %for any $n \ge \lambda_1$
\begin{align}\label{two}
g_{\lambda/\mu}(\mathbf{t}; \mathbf{x}) = \det\left[e_{\lambda'_i - i - \mu'_j + j}(t_{\mu'_j + 1}, \ldots, t_{\lambda'_i - 1}, \mathbf{x}) \right]_{1 \le i,j \le n}.
\end{align}
\end{theorem}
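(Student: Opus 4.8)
The plan is to realize the right-hand side of \eqref{two} as a signed count of families of directed lattice paths in a three-dimensional lattice, apply the Lindstr\"om--Gessel--Viennot (LGV) lemma, and then cancel all unwanted terms by an explicit sign-reversing involution.

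First I would set up the path model. Working in $\mathbb{Z}^3$ with coordinates $(x,y,z)$, for each column index $j$ I place a source $A_j$ on the plane $y=0$ and for each row index $i$ a sink $B_i$ on an upper plane $y=N$, choosing the $x$- and $z$-coordinates of $A_j$ and $B_i$ in terms of $\mu'_j$ and $\lambda'_i$ so that any monotone path from $A_j$ to $B_i$ splits into two phases: a \emph{blue phase} in an $xz$-plane, whose steps are weighted by $t_{\mu'_j+1},\ldots,t_{\lambda'_i-1}$, followed by a \emph{black phase} in $xy$-planes whose diagonal steps are weighted by the $x$-variables (with all remaining straight steps weighted $1$). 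A direct generating-function computation then shows that the total weight of all paths from $A_j$ to $B_i$ equals the matrix entry $e_{\lambda'_i-i-\mu'_j+j}(t_{\mu'_j+1},\ldots,t_{\lambda'_i-1},\mathbf{x})$. The elementary, rather than complete homogeneous, symmetric function arises because each level may be crossed by at most one weighted diagonal step, which matches the ``counted once per column'' rule in Definition~\ref{defg}.

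Next I would invoke the LGV lemma to expand the determinant as $\sum_\sigma \sgn(\sigma)\, w(\mathcal{P}_\sigma)$ over families $\mathcal{P}_\sigma=(P_1,\ldots,P_n)$ with $P_i$ running from $A_{\sigma(i)}$ to $B_i$. I then identify the \emph{good} families, those that arise from $\sigma=\mathrm{id}$ and are pairwise non-crossing, and construct a weight-preserving bijection between them and $\mathrm{RPP}(\lambda/\mu)$: the black diagonal steps record the column entries of the reverse plane partition, so that the $x$-weight reproduces $\prod_i x_i^{c_i(T)}$, while the blue vertical steps record the equalities between consecutive rows, so that the $t$-weight reproduces $\prod_j t_j^{d_j(T)}$. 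Under this correspondence the generating function of the good families is exactly $g_{\lambda/\mu}(\mathbf{t};\mathbf{x})$.

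The hard part will be cancelling everything else. Because the paths live in three dimensions, two paths belonging to a non-identity permutation need not share a lattice edge, so the classical local LGV swap at a ``first intersection'' does not by itself annihilate the remaining terms; a genuinely non-local sign-reversing involution is needed. I would build it by locating a canonical first defect in an offending family and performing a jeu-de-taquin-type sequence of local operations---sliding the conflicting strand, projecting it onto an adjacent plane, transposing, and sliding back---producing a new family of opposite sign and identical weight, as illustrated in the involution figures. The crux is to prove that this procedure is (i) weight-preserving, (ii) sign-reversing, (iii) an involution, so that applying it twice restores the original family, and (iv) fixed-point-free precisely off the good families. Establishing involutivity together with an exhaustive case analysis is where the main difficulty lies, and I expect it to occupy the bulk of the proof. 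Combining the bijection of the previous step with this cancellation yields \eqref{two}, and setting $t_i=1$ for all $i$ recovers Theorem~\ref{main}.
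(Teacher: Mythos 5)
Your overall skeleton---a 3d lattice realizing the matrix entries, the LGV lemma, a non-local sign-reversing involution, and an RPP correspondence for the surviving families---is the same as the paper's. However, there is a genuine gap at the point where you pin down what survives. You assert that the \emph{good} families are exactly those with $\sigma=\mathrm{id}$ that are pairwise non-crossing, that these biject with $\mathrm{RPP}(\lambda/\mu)$, and you require your involution to be fixed-point-free precisely off this set. In the 3d model this is impossible. Take $\lambda=(2,2)$, $\mu=(1)$, $n=2$, $m\ge 2$, so that $A_1=(0,0,1)$, $A_2=(1,0,1)$, $B_1=(1,m,1)$, $B_2=(2,m,0)$. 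Here \emph{every} family with $\sigma=\mathrm{id}$ is vertex-disjoint ($P_1$ lives entirely on the wall $z=1$ and never passes through $A_2$, while $P_2$ leaves $z=1$ immediately), and every family realizing the transposition is vertex-disjoint as well; so the LGV expansion reads
\begin{equation*}
\det = e_1(\mathbf{x})\,e_2(t_1,\mathbf{x})\;-\;e_3(t_1,\mathbf{x}),
\end{equation*}
where the first product is the generating function of \emph{all} identity non-crossing families and the subtracted term is that of all transposed ones. Since $e_3(t_1,\mathbf{x})\neq 0$, the identity non-crossing families have generating function strictly larger than $g_{(2,2)/(1)}(\mathbf{t};\mathbf{x})$, so they cannot biject with RPPs; equivalently, an involution fixing all of them would force the signed sum over the remaining, all-negative, families to vanish, which it does not.

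The conclusion is that any correct involution must also cancel some positive, identity-permutation, vertex-disjoint families against negative ones, so the fixed-point set is a \emph{proper} subset of the identity non-crossing families. This is exactly how the paper proceeds: ``good'' is \emph{defined} as being fixed by the explicit composite $\phi = s_k^{-1}\cdot\mathsf{transpose}_{k+1}\cdot s_k$ (for the minimal $k$ where it acts), and the bijection with RPPs is established only for those fixed points, using the projections, cut edges, and slide operations. Concretely, in the example above the identity family in which $P_1$ carries weight $x_2$ and $P_2$ carries weight $t_1x_1$ is vertex-disjoint, yet the projection of $P_2$ onto the plane $z=1$ meets $P_1$, so $\mathsf{transpose}_1$ pairs it with a transposed family of the same weight $t_1x_1x_2$; only the companion family ($P_1$ of weight $x_1$, $P_2$ of weight $t_1x_2$) is good, matching the unique RPP of that weight. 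Deciding which identity families survive is not bookkeeping---it is the actual content supplied by the projection/cut-edge/slide machinery---so your plan as stated fails at items (iii)--(iv) of your checklist: the involution you postulate, with fixed points equal to all identity non-crossing families, does not exist, and the correct fixed-point set cannot even be described without first introducing the non-local operations whose construction you deferred.
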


\subsection{Proof overview} First, we present a 3d lattice path system whose path enumerators give the right-hand side of \eqref{two}. Then the most difficult part of the proof contains the description of a sign-reversing weight-preserving involution on such path systems which cancels out negative terms. To describe it, we introduce several operations on path systems (subsec.~\ref{invo}). Finally, we show a correspondence between {\it good} path systems (fixed by the involution) and skew RPPs, which gives the formula for refined dual Grothendieck polynomials.

\subsection{A 3d lattice path system}
Let $m$ be the number of variables in $\mathbf{x}$ (assumed finite for now, we can afterwards let $m \to \infty$). %(we can then let $m \to \infty$).
Let $\lambda$, $\mu$ and $n \ge \lambda_1, \mu_1$ be all fixed. Consider the lattice $\mathbb{Z}^3$ with the following weighted edges $e$ (which we call \textit{steps}):
\begin{itemize}
    \item on the planes $z = k$ (the {\it walls}):
    \begin{itemize}
        \item[~] $(i,j,k) \to (i,j+1,k)$ with the weight $w(e) = 1$;
        \item[~] $(i,j,k) \to (i+1, j+1, k)$ with the weight $w(e) = x_{m - j}$ for $j \in [0,m-1]$;
    \end{itemize}
    
    \item on the plane $y = 0$ (the {\it floor}):
    \begin{itemize}
        \item[~] $(i,0,j) \to (i,0,j-1)$ with the weight $w(e) =t_j$ for $j > 0$;
        \item[~] $(i,0,j) \to (i-1,0,j-1)$ with the weight $w(e) = 1$.
    \end{itemize}
\end{itemize}
(All other weights are $0$.) See Fig.~\ref{fig:graphexample}. Note that
any (nonzero weight) path which starts on the floor plane and ends on a wall plane, begins with steps on the floor plane (each decrementing the $z$-coordinate) and then continues with steps on the wall plane (incrementing the $y$-coordinate). %Also, our graph is acyclic, since each step either decrease $z$ coordinate or increase $y$ coordinate, so there is no way to come back.
%one can walk between different planes $z = k$ only through the `floor' plane $y = 0$.
\begin{figure}
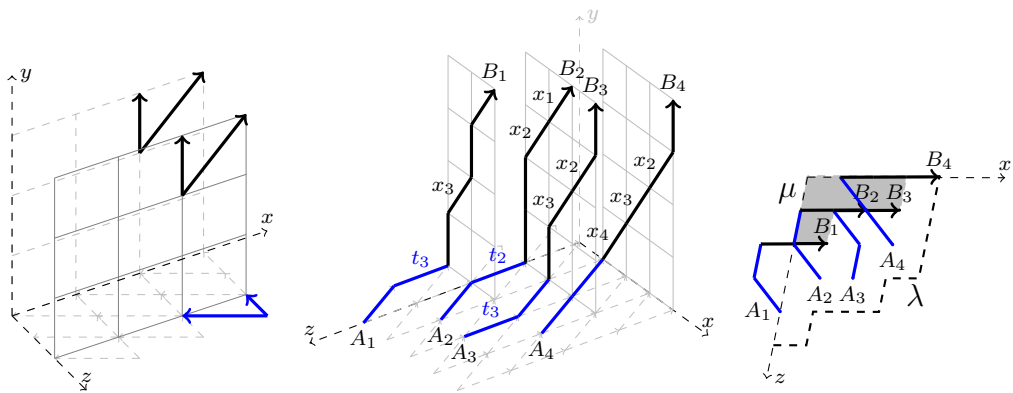
%[!h]
    \centering
    {\scriptsize
    \picturegraph
    \picturenpath
    \picturenpathy    
     }
    \caption{(a) A part of the lattice with typical sample steps. (b) An example of a path system for $\lambda'/\mu' = (5443)/(211)$, $n = 4$ and $m = 4$ with the path weights given by $w(P_1) = t_3 x_1 x_3,$ $w(P_2) = t_2 x_1 x_2,$ $w(P_3) = t_3 x_2 x_3,$ $w(P_4) =  x_2 x_3 x_4$. (c) A view of (b) from above with $\lambda$ and $\mu$ shown.}
    \label{fig:graphexample}
\end{figure}

Let us define $n$ {\it source} points $\mathbf{A} = (A_1, \ldots, A_n)$ and $n$ {\it sinks}  $\mathbf{B} = (B_1, \ldots, B_n)$ whose coordinates are given by $A_i = (i-1, 0, \lambda'_i - 1)$ and $B_i = (i,m,\mu'_i)$ for all $i \in [1,n]$. Define weighted path enumerators in the usual way
$$w(A \to B) := \sum_{P: A \to B} \prod_{e \in P} w(e)$$
over all paths $P$ in the lattice from the point $A$ to the point $B$ with steps $e$ given as above. The following formula is then clear from the construction.
\begin{lemma}
We have
\begin{align*} %\label{enk}
w(A_i \to B_j) = e_{\lambda'_i - i - \mu'_j + j}(t_{\mu'_j + 1}, \ldots, t_{\lambda'_i - 1}, x_1, \ldots, x_m).
\end{align*}
\end{lemma}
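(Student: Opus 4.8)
The plan is to prove the identity by exhibiting an explicit weight-preserving bijection between the nonzero-weight lattice paths $P\colon A_i \to B_j$ and the size-$d$ subsets of the variable set $\{t_{\mu'_j+1},\ldots,t_{\lambda'_i-1},x_1,\ldots,x_m\}$, where I set $d := \lambda'_i - i - \mu'_j + j$. Under this bijection the path weight $\prod_{e\in P} w(e)$ equals the product of the chosen variables, so summing over all paths reproduces $\sum_{|S|=d}\prod_{s\in S}s = e_d$, which is exactly the right-hand side.

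First I would record the coarse shape of an admissible path, which was already observed right after the lattice was defined: every nonzero-weight path from the floor to a wall consists of a floor portion on $y=0$ followed by a wall portion on a single plane $z=k$. Since wall steps fix the $z$-coordinate, floor steps strictly decrement it, and $B_j$ lies on $z=\mu'_j$, the wall portion must sit on $z=\mu'_j$ and the floor portion must bring $z$ down from the height $\lambda'_i-1$ of $A_i$ to $\mu'_j$. Consequently the path takes exactly $\lambda'_i-1-\mu'_j$ floor steps (one descent at each level $z=c$ with $c\in\{\mu'_j+1,\ldots,\lambda'_i-1\}$) and exactly $m$ wall steps (one at each level $y=b$ with $b\in\{0,\ldots,m-1\}$), bringing $y$ from $0$ to $m$.

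Next I would read off the weight contributed at each level. At floor level $z=c$ the path either goes straight (weight $t_c$, keeping $x$ fixed) or diagonally (weight $1$, decrementing $x$), so the $t$-factors collected form a subset of $\{t_{\mu'_j+1},\ldots,t_{\lambda'_i-1}\}$, each appearing at most once. At wall level $y=b$ the path either goes straight (weight $1$, keeping $x$ fixed) or diagonally (weight $x_{m-b}$, incrementing $x$), so the $x$-factors collected form a subset of $\{x_1,\ldots,x_m\}$, each appearing at most once. Thus a path is completely determined by its set of straight-floor levels and diagonal-wall levels, and its weight is precisely the product of the corresponding variables.

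The crux — the only part beyond pure bookkeeping — is to verify that the $x$-coordinate endpoint imposes no constraint other than fixing the total number of chosen variables to be $d$. Suppose the chosen subset contains $a$ of the $t$-variables and $b$ of the $x$-variables. Then there are $(\lambda'_i-1-\mu'_j)-a$ diagonal floor steps (each decrementing $x$) and $b$ diagonal wall steps (each incrementing $x$), so the net horizontal displacement is
\[
b - \bigl[(\lambda'_i-1-\mu'_j)-a\bigr] = (a+b) - (\lambda'_i-1-\mu'_j).
\]
Starting at $x=i-1$ and requiring the endpoint $x=j$ forces $(a+b)-(\lambda'_i-1-\mu'_j)=j-(i-1)$, i.e.\ $a+b=\lambda'_i-i-\mu'_j+j=d$; conversely, any subset of total size $d$ makes this computation run in reverse, so the path automatically lands on $B_j$. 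Since $0\le(\lambda'_i-1-\mu'_j)-a$ and $0\le b\le m$ hold for every such subset, every size-$d$ subset yields a genuine admissible path, and the bijection is established. The degenerate cases $d<0$ or $d$ exceeding the number of available variables produce no paths, matching $e_d=0$, which completes the argument.
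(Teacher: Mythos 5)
Your bijection is precisely the argument the paper leaves implicit (its entire justification is that the formula ``is then clear from the construction''), and in the main case your bookkeeping is correct: the floor-then-wall decomposition of a nonzero-weight path, the identification of the available $t$-variables $t_{\mu'_j+1},\dots,t_{\lambda'_i-1}$ (one per floor level) and $x$-variables $x_1,\dots,x_m$ (one per wall level), and the displacement computation showing that landing on $B_j$ forces exactly $d=\lambda'_i-i-\mu'_j+j$ variables to be chosen, with each such choice coming from exactly one path.

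There is, however, one genuine gap, located in your final claim that $0\le(\lambda'_i-1-\mu'_j)-a$ holds for every admissible subset, so that ``every size-$d$ subset yields a genuine admissible path.'' That inequality follows from $a\le\#\{\text{available }t\text{'s}\}$ only when $\lambda'_i-1-\mu'_j\ge 0$. If instead $\mu'_j\ge\lambda'_i$, the $t$-range is empty, so $a=0$ and the required number of diagonal floor steps $(\lambda'_i-1-\mu'_j)-a$ is negative; in fact no path from $A_i$ to $B_j$ exists at all, since every edge of the lattice weakly decreases the $z$-coordinate while $A_i$ sits at $z=\lambda'_i-1<\mu'_j$. Meanwhile $d$ can still satisfy $0\le d\le m$, so the right-hand side $e_d(x_1,\dots,x_m)$ is nonzero and the stated identity itself fails for that pair $(i,j)$. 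This happens exactly when columns $i,\dots,j$ of $\lambda/\mu$ are all empty; the simplest instance is any $i$ with $\lambda'_i=\mu'_i$ (in particular every $i>\lambda_1$, which the hypothesis $n\ge\lambda_1,\mu_1$ permits), where $d=0$, the right-hand side is $e_0=1$, but $w(A_i\to B_i)=0$. To be fair, this defect lives in the lemma as stated, not only in your argument: the paper's ``clear from the construction'' glosses over the same point, and its Corollary inherits it. A correct write-up should therefore either restrict to $n=\lambda_1$ with every column of $\lambda/\mu$ nonempty (the situation all of the paper's figures depict), reducing the general case separately by the usual stability of such determinants under empty columns, or at least flag these pairs $(i,j)$, rather than assert that all degenerate cases ``produce no paths, matching $e_d=0$.''
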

Similarly, define the signed weighted multi-enumerators 
\begin{align}\label{wdef}
w(\mathbf{A} \to \mathbf{B}) := \sum_{ \mathbf{P} \in {N}(\mathbf{A}, \mathbf{B})} \sgn(\mathbf{P})\, w(\mathbf{P}),
\end{align}
where ${N}(\mathbf{A}, \mathbf{B})$ is the set of nonintersecting path systems $\mathbf{P}$ from $\mathbf{A}$ to $\mathbf{B}$ (i.e. $n$ paths with no common vertices as in Fig.~\ref{fig:graphexample}(b)), $\sgn(\mathbf{P}) := \sgn(\sigma)$ for $\sigma \in S_n$ if $\mathbf{P}$ joins $A_i$ with $B_{\sigma(i)}$ for all $i \in [1,n],$ and $w(\mathbf{P})$ is the product of weights of all edges in the whole system $\mathbf{P}$. 
%(see Fig.~\ref{fig:graphexample}(b), here paths intersect if they share common vertex)
%Refer to such permutation $\sigma$ as twist of path system.

Since our directed graph %construction 
is acyclic, by the LGV lemma we immediately obtain the following.
\begin{corollary}
We have
\begin{align*}%\label{wsum}
\det\left[e_{\lambda'_i - i - \mu'_j + j}(t_{\mu'_j + 1}, \ldots, t_{\lambda'_i - 1}, x_1, \ldots, x_m) \right]_{1 \le i,j \le n} = w(\mathbf{A} \to \mathbf{B}).  %= \sum_{ \mathbf{P} : \mathbf{A} \to \mathbf{B}} \sgn(\mathbf{P}) w(\mathbf{P}),
\end{align*}
%where %the r.h.s. is the weighted signed enumerator for non-intersecting path systems from $\mathbf{A}$ to $\mathbf{B}$. 
\end{corollary}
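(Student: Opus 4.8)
The plan is to apply the Lindström--Gessel--Viennot lemma directly, using the preceding Lemma to identify the matrix entries with path enumerators. First I would rewrite the left-hand side via the Leibniz expansion of the determinant: by the Lemma each entry equals $w(A_i \to B_j)$, so
$$\det\left[w(A_i \to B_j)\right]_{1 \le i,j \le n} = \sum_{\sigma \in S_n} \sgn(\sigma) \prod_{i=1}^n w(A_i \to B_{\sigma(i)}).$$
Since each factor $w(A_i \to B_{\sigma(i)})$ is itself a sum over individual paths, the right-hand side unfolds into a signed sum over all families of $n$ paths in which the $i$-th path runs from $A_i$ to the sink $B_{\sigma(i)}$, each family weighted by $\sgn(\sigma)$ times the product of its edge weights.

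Next I would confirm the acyclicity that makes the LGV lemma applicable. The potential $\phi(i,j,k) = j - k$ strictly increases along every admissible step: a floor step keeps the $y$-coordinate fixed at $0$ and decrements $z$, while a wall step increments $y$ and leaves $z$ fixed, so in either case $\phi$ grows by exactly $1$. Hence the directed graph carries no directed cycle, and the hypotheses of the LGV lemma \cite{lgv} are met.

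Then I would invoke the lemma itself. Its standard sign-reversing involution---exchanging the terminal segments of two paths at their first shared vertex---pairs each intersecting family with another of opposite sign but identical total weight, so all intersecting contributions cancel. What survives is precisely the signed sum over nonintersecting families, which is the defining expression \eqref{wdef} for $w(\mathbf{A} \to \mathbf{B})$. Combining the three steps yields the claimed identity.

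Because this corollary is a direct application of a standard tool, I anticipate no genuine obstacle here; the only point requiring care is the acyclicity check, which the monotone potential $\phi = j - k$ settles at once. The real difficulty of the overall argument lies not in this step but in the subsequent analysis of \emph{which} nonintersecting families remain---and how those good families correspond bijectively to reverse plane partitions---carried out through the non-local sign-reversing involution developed in the following subsections.
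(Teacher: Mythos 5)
Your proposal is correct and takes essentially the same route as the paper, which likewise obtains the corollary by a direct application of the Lindstr\"om--Gessel--Viennot lemma to this lattice, using the preceding lemma to identify the matrix entries with the path enumerators $w(A_i \to B_j)$ and noting only that the directed graph is acyclic. Your monotone potential $\phi = y - z$ is a clean justification of the acyclicity that the paper asserts without further comment.
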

Note that the right-hand side may contain terms with negative signs since we have a 3d lattice and there are many nonintersecting path systems corresponding to non-identity permutations.
%\begin{figure}
%    \centering
%    {\scriptsize
%    \picturenpath
%    }
%    \caption{Example of path system for $\lambda'/\mu' = (5443)/(211)$}
%    \label{fig:pathsystem}
%\end{figure}

\subsection{Path transformations and sign-reversing involution}\label{invo} In this subsection we describe certain path operations and a sign-reversing weight-preserving involution on $N(\mathbf{A}, \mathbf{B})$ which leaves only positive terms in %the sum  
\eqref{wdef}. %Actually, this is the most difficult part of the proof. 
Suppose we have a nonintersecting path system $\mathbf{P} = (P_1, \ldots, P_n)$ from $\mathbf{A}$ to $\mathbf{B}$ where the paths are ordered with respect to the sinks ordering, i.e. $P_i$ has sink $B_i$. We are going to introduce several important definitions necessary for path transformations.

\begin{definition}[Projections and intersections]\label{defproj} Let us define the following notions:

(Path projections to planes) For a path $P_i$ from (some) $A_j$ to $B_i$, the {\it projection} of $P_i$ on the plane $z = k$ with $k \ge \mu'_i$ is defined as follows: take the first intersection point $C$ of $P_i$ with the plane $z = k$ and copy the part of $P_i$ from the plane $z = \mu'_i$ to the plane $z = k$ starting from $C$ % and shortened at the end,
and bounded by the plane $x = i$ so that when it hits that plane, it continues on it (by increasing the $y$-coordinate) to the point $(i,m,k)$. 
See Fig.~\ref{fig:projection}~(a). 
(If no such point $C$ exists or if its $x$-coordinate is larger than $i$, then the projection is undefined.
Note also that if the projection is defined, then $k \le \lambda'_j - 1$.)
%(If no such point $C$ exists or if its $x$-coordinate is larger than $i$, then projection is just a path from $(i,0,k)$ to $(i,m,k)$.)
%Note that the largest $k$ for which the corresponding projection is defined is either $k' = \lambda'_j - 1$ or such unique $k'$ that the plane $z = k'$ contains first intersection point of the plane $x = i$ and path $P_i$, depending on the position of $A_j$ in respect to $x = i$ plane. 
%projection to the plane with maximal $z = k'$ 
%[???or it is a straight a line from $(i, 0, k')$ to $(i, m, k')$ ??? what's this? you can't define $k'$ via $k'$].
%In other words, projections to the planes from $z = \mu'_i$ to $z = k'$ decrease number of wall edges with weight from $\mathbf{x}$ no more than by 1 with each increment of $z$, until no more such edges left or $k = \lambda'_j - 1$ is reached. 

(Projection intersections) We say that two paths %$P$ and $P'$ 
{\it intersect on the plane} $z = k$ if their projections on the plane $z = k$ are defined (as above) and intersect.

(Path system intersections) A path system has {\it no intersections on the plane} $z = k$ if no pair of paths in this system intersects on the plane $z = k$.
\end{definition}
\begin{definition}[Cut edges] 
When we project $P_i$ on $z = k$, % a part of P_i disappears behind the plane.
some (wall) part of $P_i$ disappears on the projection. %behind the plane $x = i$ and some floor part of $P_i$ disappears behind the plane $z = k$. 
To keep track of such edges and (floor) edges of $P_i$ lying between the planes $z = \mu'_i$ and $z = k$, we define \textit{cut edges} lying on the plane $x = i$. 
Let $T_{i,k}$ be the first intersection point of the plane $x = i$ and the projection of $P_i$ on the plane $z = k$ (whenever projection is defined). See again Fig.~\ref{fig:projection}~(a).
Note that for fixed $i$, the points $T_{i,k}$ have non-increasing $y$-coordinates. 
%For fixed $k$, 
Let $f_k$ be the (single floor) edge of $P_i$ lying between the planes $z = k$ and $z = k + 1$. 
We then define \textit{$k$-th cut edge} of two types:
%\begin{enumerate}
    %\item 
    
    (1) If $T_{i,k}$ and $T_{i,k+1}$ have the same $y$-coordinate, then we must have $w(f_k) = t_{k+1}$ and the segment $T_{i,k}\, T_{i,k+1}$ is the $k$-th cut edge of weight $t_{k+1}$;
    %\item 
    
    (2) Otherwise, $T_{i,k+1}$ is below $T_{i,k}$ (by $y$-coordinate), $w(f_k) = 1$, and there is a unique (wall) %(single wall)
    edge $e_k$ of $P_i$ of weight $w(e_k) = x_s$ for some $s$, such that this edge is seen in the projection of $P_i$ on $z = k$, but disappears in the projection on $z = k + 1$. 
    Let $T'_{i,k+1} = T_{i,k} + (0,-1,1)$. %Denote point $T'_{i,k+1}$ obtained from $T_{i,k}$ decrementing $y$-coordinate by 1 and incrementing $z$-coordinate by 1. 
    Then the segment $T_{i,k}\, T'_{i,k+1}$ is the $k$-th cut edge of weight $x_s$.

    %Then the $k$-th cut edge is a vector??? with end pinned??? to $T_{i,k}$ and equal to vector difference??? of $e_k - f_k$ with weight $x_s$. [This is a bad unclear sentence. Don't use vector. Also, explain example how this works between planes $z = 2,3$ on the picture.]  
%\end{enumerate}

% Then $k$-th cut edge corresponds either: (i) to an edge of weight $t_{k+1}$ between the planes $z = k$ and $z = k+1$, or (ii) to an edge of weight $x_s$ which was the last on the projection on $z = k$ and disappeared on $z = k+1$.
%in the part $A_j \to C$ between the planes $z = k, k+1$. 
%(either to the edge in the projection on the plane $z = k$ but was cut off from the plane $z = k + 1$, or to a ${t}$-weight edge between these planes). 

For $k$-th cut edges $e$ and $e'$ (corresponding to different $i$), $e$ is {\it larger} than $e'$ if the endpoint of $e$ on the plane $z = k$ is higher than the endpoint of $e'$ on the same plane. 
\end{definition}
\begin{figure}
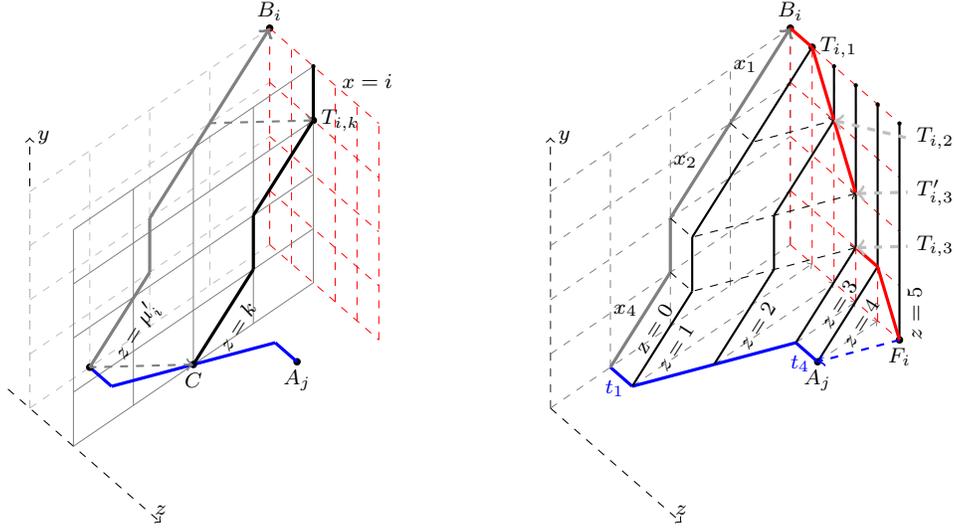
%[h]
    \centering
    {\scriptsize
    \pictureprojj
    \qquad\qquad\qquad \pictureproj
    }
    \caption{(a) Projection of a path $P_i : A_j \to B_i$ on the plane $z = k$. (b) Several projections (black solid) and cut edges (red solid). %Original path is displayed blue. Cut edges going beyond $x = i$ are captured on orange plane. 
    }
    \label{fig:projection}
\end{figure}
\begin{example}
Consider Fig.~\ref{fig:projection}~(b) where cut edges are shown as red solid segments. For $k = 0$ and the planes $z = 0$ and $z = 1$ we have $B_i = T_{i,0} = (4,4,0)$, $T_{i,1} = (4,4,1)$, $w(f_0) = t_1$. Then the $0$-th cut edge is the segment $T_{i,0}\, T_{i,1}$ of weight $t_1$ (type 1).
For $k = 2$ and the planes $z = 2$ and $z = 3$, we have $w(f_2) = 1$, $T_{i,2} = (4,3,2)$, $T'_{i,3} = (4,2,3)$, $T_{i,3} = (4,1,3)$ and $w(e_{i+1}) = x_2$. Then the $2$-nd cut edge is the segment $T_{i,2}\, T'_{i,3}$ of weight $x_2$ (type 2).
%Note that, we displayed additional projection to the plane $z = 5$, which will be useful in future reasoning.
\end{example}

%\begin{definition}[Projection intersections]
%We say that two paths $P$ and $P'$ {\it intersect on the plane} $z = k$ if their corresponding projections to the plane $z = k$ are defined and intersect.
%\end{definition}
%\begin{definition}[Path system intersections]
%We say that a path system has {\it no intersections on the plane} $z = k$ if no pair of paths in this system intersects on the plane $z = k$.
%\end{definition}

\begin{definition}[Step and slide operations]\label{defstep}
The operation $\mathsf{step}_k$ on $\mathbf{P}$ is defined as follows:
%\begin{itemize}
    %\item[(1)] 
    
    (1) Choose minimal index $i$ such that $P_i$ and $P_{i+1}$ intersect on the plane $z = k + 1$, but do not intersect on the plane $z = k$; if there is no such index, do nothing;
    %\item[(2)] %$P_i$ and $P_{i+1}$ do not intersect at plane $z = k$, but intersect on $z = k + 1$; only way it is possible, 
    
    (2) Let $e_i$ and $e_{i+1}$ be the edges of $P_i$ and $P_{i+1}$ between the planes $z = k$ and $z = k + 1$. By the choice of $i$, %(i.e. intersection properties), 
    we must have the weights $w(e_{i}) = 1$ and $w(e_{i+1}) = t_{k+1}$ (other cases do not produce intersections on $z = k + 1$).
    Let $C$ be the first common point of projections of $P_i$ and $P_{i+1}$ on the plane $z = k + 1$. Let $e'_{i}$ and $e'_{i+1}$ be the edges of projections of $P_i$ and $P_{i+1}$ on $z = k + 1$ preceding $C$ (i.e. $C$ is an endpoint of these edges). We must have $w(e'_{i}) = x_s$ and $w(e'_{i+1})=1$. %We then exchange $e_{i}$ with $e_{i+1}$, and exchange $e'_{i}$ with $e'_{i+1}$.
    We then modify these paths as follows. In $P_i$ we switch the edge $e_{i}$ (of weight $1$) to the edge of weight $t_{k+1}$ with same starting point, shift the part of $P_i$ between the edges $e_{i}$ and $e'_{i}$ by $1$ to the right (towards the $x$-axis), and switch the edge $e'_{i}$ (of weight $x_s$) to the edge of weight $1$ with the same ending point (other parts of $P_i$ remain intact). The path $P_{i+1}$ is modified in the same way; we `exchange' edges $e_{i}$ with $e_{i+1}$, and `exchange' $e'_{i}$ with $e'_{i+1}$.
    See Fig.~\ref{fig:stepk}.
%\end{itemize}

Define now the operation $\mathsf{slide}_k$ on $\mathbf{P}$
by repeatedly performing the operation $\mathsf{step}_k$ until it makes no changes in the path system. See Fig.~\ref{fig:slidek}.
\begin{remark}
One can observe that intersecting paths on each $\mathsf{step}_k$ are actually touching. 
\end{remark}

\end{definition}
   \begin{figure}
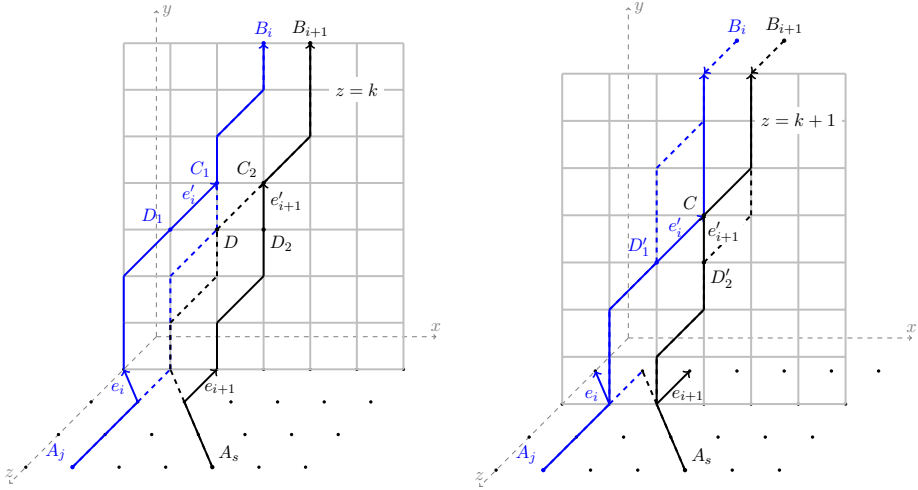
%[h]
        \begin{center}
            \centering
            \begin{minipage}{0.4\textwidth}
                \centering
                \resizebox{\textwidth}{!}{
                    \picturestepk
                }
            \end{minipage}
            \begin{minipage}{0.4\textwidth}
                \centering
                \resizebox{\textwidth}{!}{%
                    \picturestepkinv%
            }
            \end{minipage}
        \caption{The paths $P_i$ and $P_{i+1}$ projected to the planes $z = k$ (left) and $z = k + 1$ (right). The dashed fragments show how the operation $\mathsf{step}_k$ will modify the paths. %is indicated in dashed lines. %before (solid) and after (dashed) applying the operation. %(solid lines - before, dashed - after).
        }
        \label{fig:stepk}
        \end{center}
    \end{figure}
    
%\begin{definition}[Slide operations]
%Define the operation $\mathsf{slide}_k$ on path systems $\mathbf{P}$
%by repeatedly performing $\mathsf{step}_k$ until it makes no changes in the path system.
%\end{definition}

\begin{definition}[Inverse steps and slide operations]
%Suppose when we 
The operation $\mathsf{rstep}_k$ is defined as follows:
%\begin{itemize}
    %\item[(1)] 
    
    (1) Choose maximal index $i$ such that $P_i$ and $P_{i+1}$ intersect on the plane $z = k$, but do not intersect on the plane $z = k + 1$, and $k$-th cut edge of $P_i$ is not below $k$-th cut edge of $P_{i+1}$. If there is no such index, do nothing.
    %\item[(2)] %$P_i$ and $P_{i+1}$ do not intersect at plane $z = k + 1$, but intersect on $z = k$;
    
    (2) Let $e_{i}$ and $e_{i+1}$ be edges of $P_i$ and $P_{i+1}$ between the planes $z = k$ and $z = k + 1$. Again, by the choice of $i$ we must have the weights $w(e_{i}) = t_{k+1}$ and $w(e_{i+1}) = 1$.  Let $D$ be the last common point of projections of $P_i$ and $P_{i+1}$ on $z = k$. Let $e'_{i}$ and $e'_{i+1}$ be the edges of projections of $P_i$ and $P_{i+1}$ to the plane $z = k$ succeeding $D$ (i.e. $D$ is an endpoint of these edges). We must have $w(e'_{i}) = 1$ and $w(e'_{i+1}) \neq 1$. We then `exchange'  $e_{i}$ with $e_{i+1}$, and `exchange' $e'_{i}$ with $e'_{i+1}$, similarly as in the previous definition.
%\end{itemize}
%Similarly, 

Define the operation $\mathsf{rslide}_k$ on path systems that do not intersect on the plane $z = k + 1$,
by repeatedly performing the operation $\mathsf{rstep}_k$ until it makes no changes in the path system. See Fig.~\ref{fig:slidek} in the reverse direction.
\end{definition}

%\begin{definition}[Inverse slide operations]
%Define the operation $\mathsf{rslide}_k$ on path systems $\mathbf{P}$, that do not intersect on the plane $z = k + 1$,
%by repeatedly performing $\mathsf{step}^{-1}_k$ until it makes no changes in the path system.
%\end{definition}

\begin{figure}
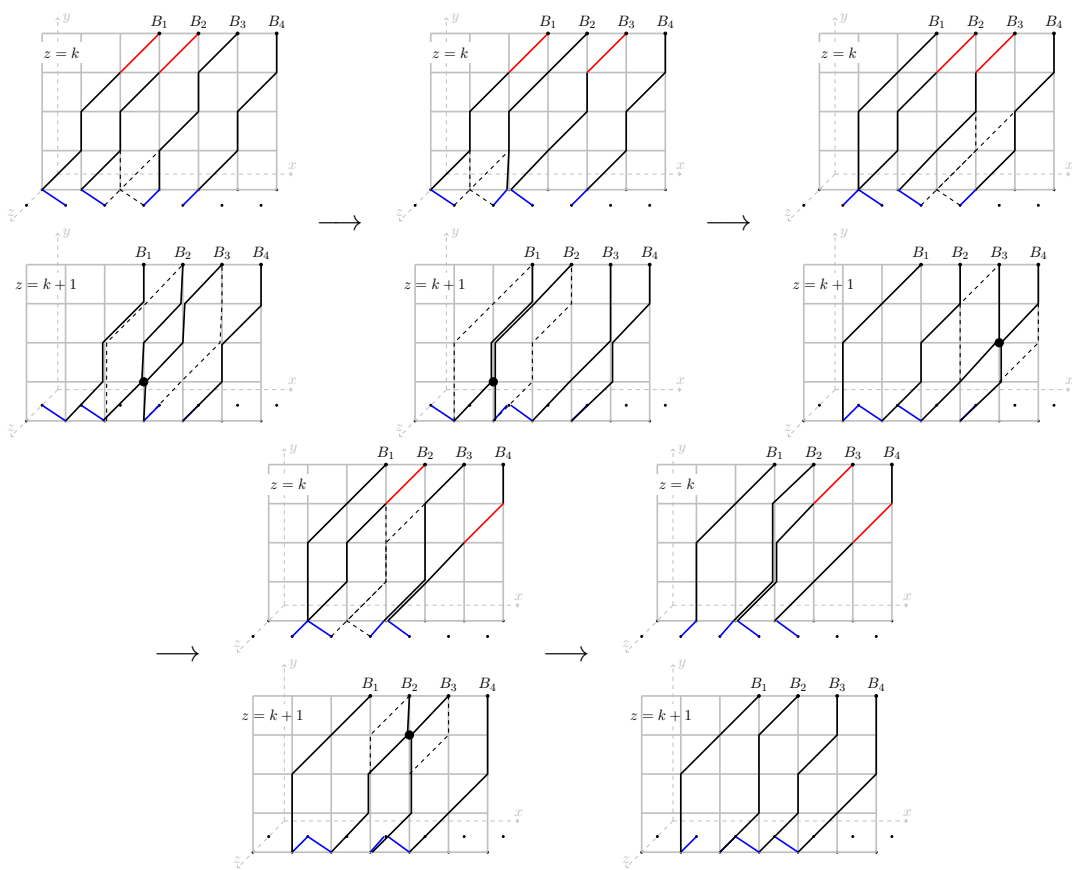
%[h]
    \begin{center}
        \centering
        \begin{minipage}{0.27\textwidth}
            \centering
            \resizebox{\textwidth}{!}{
                \picturewavesA
            }
            \resizebox{\textwidth}{!}{
                \picturewavesAinv
            }
        \end{minipage}
        $\longrightarrow{}$
        \begin{minipage}{0.27\textwidth}
            \centering
            \resizebox{\textwidth}{!}{
                \picturewavesB
            }
            \resizebox{\textwidth}{!}{
                \picturewavesBinv
            }
        \end{minipage}
        $\longrightarrow{}$
        \begin{minipage}{0.27\textwidth}
            \centering
            \resizebox{\textwidth}{!}{
                \picturewavesC
            }
            \resizebox{\textwidth}{!}{
                \picturewavesCinv
            }
        \end{minipage}
        
        $\longrightarrow{}$
        \begin{minipage}{0.27\textwidth}
            \centering
            \resizebox{\textwidth}{!}{
                \picturewavesD
            }
            \resizebox{\textwidth}{!}{
                \picturewavesDinv
            }
        \end{minipage}
        $\longrightarrow{}$
        \begin{minipage}{0.27\textwidth}
            \centering
            \resizebox{\textwidth}{!}{
                \picturewavesE
            }
            \resizebox{\textwidth}{!}{
                \picturewavesEinv
            }
        \end{minipage}
    \caption{An example of $\mathsf{slide}_k$ operation where $\mathsf{step}_k$ is applied four times. Each step shows path system projections on the planes $z = k, k+1$ (top, bottom). The red edges disappear on $z = k + 1$. The dashed lines show how $\mathsf{step}_k$ is applied. When  $\mathsf{rslide}_k$ is applied to the resulting system, it operates in the reverse direction and results in the initial path system.}   
    \label{fig:slidek}
    \end{center}
\end{figure}

\begin{lemma}\label{slide}
%Let $\mathbf{P}$ be a path system. 
The following properties hold:
%\begin{itemize}
%    \item[(i)] 
    
    (i) Suppose $\mathbf{P}$ has no intersections on the plane $z = k$. Then after performing $\mathsf{slide}_k$, the paths with sinks on the planes $z = 0,\ldots,k$ have no intersections on the plane $z = k + 1$.
    %\item[(ii)] 
    
    (ii) Suppose $\mathbf{P}$ has no intersections on the plane $z = k + 1$ and $k$-th cut edges are non-increasing (from left to right). Then after performing $\mathsf{rslide}_k$, the resulting system has no intersections on the plane $z = k$.
    %\item[(iii)] 
    
    (iii) Suppose $\mathbf{P}$ has no intersections on the plane $z = k$. Then after performing $\mathsf{slide}_k$ and then $\mathsf{rslide}_k$ we get back to $\mathbf{P}$. %(i.e. they are inverse operations).
%\end{itemize}
\end{lemma}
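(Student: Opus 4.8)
The three parts share a common engine: a careful analysis of a single $\mathsf{step}_k$ (resp.\ $\mathsf{rstep}_k$), from which the behaviour of the full $\mathsf{slide}_k$ (resp.\ $\mathsf{rslide}_k$) is assembled, so the plan is to isolate the local properties of one step first. For the minimal index $i$ selected by $\mathsf{step}_k$, I would begin by verifying that the edges $e_i,e_{i+1}$ between the planes $z=k$ and $z=k+1$ and the projection edges $e'_i,e'_{i+1}$ preceding the crossing point $C$ carry exactly the weights asserted in Definition~\ref{defstep}; this is forced, since $P_i,P_{i+1}$ do not intersect on $z=k$ but do on $z=k+1$, so the only weight pattern that can create an intersection one plane up is $w(e_i)=1,\ w(e_{i+1})=t_{k+1}$ (and dually $w(e'_i)=x_s,\ w(e'_{i+1})=1$). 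The exchange is then a tail-swap of the two projections past $C$ together with the matching modification of the floor and wall edges, and I would record its local effect: (a) both sinks $B_i,B_{i+1}$ are preserved; (b) the pair no longer intersects on $z=k+1$; (c) the projections on $z=k$ are unchanged outside the modified channel, so $P_i,P_{i+1}$ still do not intersect on $z=k$. Because the modification is confined to the channel between the two paths and only shifts a bounded portion of $P_i$ toward the $x$-axis, it does not alter the relation of $P_i$ or $P_{i+1}$ to any other path; I would also track that a step makes the $k$-th cut edges strictly ``more sorted''.

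For (i), the local analysis already supplies the invariant ``no intersections on $z=k$'' throughout $\mathsf{slide}_k$, so it remains to prove termination and the stated conclusion. Termination follows from a monovariant: each $\mathsf{step}_k$ strictly decreases a nonnegative integer statistic of the crossing pattern on $z=k+1$ (for instance the number of intersecting adjacent pairs, or a lexicographic inversion count of the cut-edge ordering recorded above). When $\mathsf{step}_k$ can no longer fire, no adjacent pair (among paths with sinks on $z=0,\dots,k$) intersects on $z=k+1$, while no pair intersects on $z=k$. To upgrade ``adjacent'' to ``all'', I would invoke a planarity argument: the projections on $z=k+1$ are monotone staircases whose terminals $(i,m,k+1)$ are ordered by $i$, so a crossing of a non-adjacent pair would force a crossing of some consecutive pair lying between them, contradicting the termination condition.

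Part (ii) is the mirror image, run with $\mathsf{rstep}_k$ under the hypothesis that the $k$-th cut edges are non-increasing from left to right. I would check that this hypothesis is precisely what makes $\mathsf{rstep}_k$ well defined, namely that the maximal index it selects is a genuine ``undo'' site with the required weights $w(e_i)=t_{k+1},\ w(e_{i+1})=1$, and that each $\mathsf{rstep}_k$ removes an intersection on $z=k$ without creating one on $z=k+1$. The termination monovariant and the adjacent-to-all upgrade are then verbatim as in (i), now resolving crossings on the plane $z=k$ rather than $z=k+1$.

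The main obstacle is (iii): that $\mathsf{rslide}_k$ reverses $\mathsf{slide}_k$ exactly, despite the two procedures using opposite (minimal- versus maximal-index) greedy rules. My plan is to prove first that a single $\mathsf{step}_k$ and a single $\mathsf{rstep}_k$ are mutually inverse \emph{local} moves, and then that the two greedy schedules match up in reverse. For the latter I would argue by induction on the number $N$ of steps performed by $\mathsf{slide}_k$, the crucial claim being that the \emph{last} step of $\mathsf{slide}_k$, performed at some index, leaves a configuration on which $\mathsf{rstep}_k$ is forced to fire at that very index; this is exactly where the maximal-index rule of $\mathsf{rstep}_k$ meets the minimal-index rule of $\mathsf{step}_k$, mediated by the monotone cut-edge ordering produced in (i). Granting this, one $\mathsf{rstep}_k$ undoes the final $\mathsf{step}_k$ and reduces to a system requiring $N-1$ steps, closing the induction. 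Establishing this compatibility of the selection rules—equivalently, a confluence/locality statement showing that the terminal configuration of $\mathsf{slide}_k$ carries a canonical ``history'' certified by its cut edges—is the crux, and I expect the cut-edge monotonicity from (i) to be the essential bookkeeping device that identifies which step must be reversed next.
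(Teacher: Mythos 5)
Your local analysis of a single $\mathsf{step}_k$ contains a factual error that undermines the whole plan. Claim (c) --- that after the step the pair $P_i,P_{i+1}$ still does not intersect on $z=k$ --- is false: the step shifts the middle portion of $P_i$ one unit toward $P_{i+1}$ and the middle portion of $P_{i+1}$ one unit toward $P_i$, so that afterwards their projections on $z=k$ share a point (the point $D$ of Fig.~\ref{fig:stepk}, where $D_1$ and $D_2$ merge). The paper's proof states this explicitly: the step ``turns them into paths that do not intersect on the plane $z=k+1$ but intersect on $z=k$.'' This is not a defect but the essential feature of the construction: the touching on $z=k$ created by $\mathsf{step}_k$ is exactly what $\mathsf{rstep}_k$ (which by definition fires on a pair intersecting on $z=k$ but not on $z=k+1$) needs in order to undo it. Indeed, if your invariant ``no intersections on $z=k$ throughout $\mathsf{slide}_k$'' were correct, then $\mathsf{rslide}_k$ could never perform a single $\mathsf{rstep}_k$ and would be the identity, contradicting part (iii) whenever $\mathsf{slide}_k$ acts nontrivially. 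With the invariant gone, your derivation of (i) collapses: when $\mathsf{step}_k$ can no longer fire you may only conclude that each adjacent pair either does not intersect on $z=k+1$ or intersects on \emph{both} planes, and ruling out the second alternative is precisely the bookkeeping the paper's proof does --- tracking that a pair which touches on $z=k$ (as a result of a step) does not intersect on $z=k+1$, and that subsequent steps on the neighbouring pairs $(P_{i-1},P_i)$ or $(P_{i+1},P_{i+2})$ remove that touching, possibly re-creating an intersection on $z=k+1$, after which the pair becomes eligible for $\mathsf{step}_k$ again.

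A second, related gap is the termination monovariant. The number of intersecting adjacent pairs on $z=k+1$ is \emph{not} monotone under $\mathsf{step}_k$: a step applied to $(P_{i+1},P_{i+2})$ can re-create an intersection of $(P_i,P_{i+1})$ on $z=k+1$, as the paper notes in so many words (``then the pair $(P_i,P_{i+1})$ will not intersect on $z=k$ (but possibly on $z=k+1$)''). The monovariant actually used is that each $\mathsf{step}_k$ transfers an edge of weight $t_{k+1}$ from $P_{i+1}$ to $P_i$, i.e.\ such edges only move left, which bounds the number of steps. Your planarity upgrade from adjacent pairs to arbitrary pairs is fine (and is left implicit in the paper), and your scheme for (iii) --- single $\mathsf{step}_k$ and $\mathsf{rstep}_k$ are mutually inverse, and by induction the last step of $\mathsf{slide}_k$ is the first $\mathsf{rstep}_k$ to fire --- is a reasonable reorganization of the paper's symmetry argument (reflection of the lattice in $x$, swapping the roles of $C$ and $D$ and of the two planes). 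But that scheme also hinges on the corrected local picture, since the intersection on $z=k$ created by the step is what certifies the undo site; as stated, your premises make it impossible to complete.
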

\begin{proof}
It is illustrative to check the statements on Fig.~\ref{fig:slidek}.
(i) First, let us show the that the following is true on each   $\mathsf{step}_k$: it takes two leftmost paths $P_i$ and $P_{i+1}$ which do not intersect on $z = k$ but intersect on $z = k + 1$, and turns them into paths that do not intersect on the plane $z = k + 1$ but intersect on $z = k$. Recall the intersection point $C$ from Def.~\ref{defstep}. Denote the projections of $P_{i}$ and $P_{i+1}$ on the plane $z = j$ ($j = k,k+1$) as $P^{(j)}_{i}$ and $P^{(j)}_{i+1}$. Let $C_1$ and $C_2$ be the points of $P^{(k)}_{i}$ and $P^{(k)}_{i+1}$ that project to $C$ on $z = k + 1$. Additionally, denote the point $D_1$ as a preceding point of $C_1$ in $P^{(k)}_{i}$ and the point $D_2$ preceding $C_2$ in $P^{(k)}_{i+1}$, see Fig.~\ref{fig:stepk}. Observe that after the operation, the parts of $P^{(k)}_{i}$ and $P^{(k)}_{i+1}$ starting from the points $C_1$ and $C_2$ respectively, remain unchanged. Moreover, $D_1$ and $D_2$ will coincide, let us call this point by $D$, which will be the last common point of the paths $P^{(k)}_i$ and $P^{(k)}_{i+1}$ on $z = k$ after applying the operation. Now, let us show that $P^{(k+1)}_{i}$ and $P^{(k+1)}_{i+1}$ will not intersect after the operation $\mathsf{step}_k$. Similarly, denote preceding points of $C$ in $P^{(k+1)}_{i}$ and $P^{(k+1)}_{i+1}$ as $D'_1$ and $D'_2$, respectively. Then the part of the path $P^{(k+1)}_i$ until the point $D'_1$ will stay unchanged; the same is true for $D'_2$ and $P^{(k+1)}_{i+1}$. The parts starting from $D'_1$ and $D'_2$ will be nonintersecting as well, since they spread out in different directions.

    So the application of $\mathsf{step}_k$ makes two neighbouring paths $(P_i, P_{i+1})$ nonintersecting on $z = k + 1$ and intersecting on $z = k$. Assume that during the operation $\mathsf{slide}_k$, we applied it %$\mathsf{step}_k$ 
    to the pair of paths $(P_i, P_{i+1})$ for the first time, call this moment $T$. If the next pair to be processed is $(P_{i - 1}, P_i)$, then after $\mathsf{step}_k(P_{i-1},P_{i})$ we will have the pair $(P_i, P_{i+1})$ nonintersecting on both planes $z = k, k + 1$. Furthermore, if one applies $\mathsf{step}_k$ to the pair $(P_{i + 1}, P_{i+2})$ %after the moment $T$,
    for the first time after the moment $T$,
    then the pair $(P_i, P_{i+1})$
    %will be nonintersecting on the same planes.
    will not intersect on $z = k$ (but possibly on $z = k + 1$). % if so, step_k applied again
    Repeatedly combining this and the fact that the edges of weights $t_{k+1}$ move to the left on each $\mathsf{step}_k$, we see that the process $\mathsf{slide}_k$ is finite and each pair of paths $(P_i, P_{i + 1})$ will not intersect on $z = k + 1$ when $\mathsf{slide}_k$ completes the action. 

(ii) Since $k$-th cut edges are non-increasing from left to right, $\mathsf{rstep}_k$ will have effect symmetric to $\mathsf{step}_k$, one can observe that $\mathsf{step}_k$ and $\mathsf{rstep}_k$ are symmetric operations, if we reflect the lattice with respect to the line $x = 0$. %rotate the lattice around the $z$ axis by $180^{\circ}$ and reflect it w.r.t. the plane $z = 0$. 
Then in the above observations, the meaning of the points $D$ and $C$ will be swapped (i.e. $D$ becomes the first point of intersection on the next plane); similarly, $C_1$ and $C_2$ will be swapped with $D'_1$ and $D'_2$, and so on. Therefore, $\mathsf{slide}_k$ and $\mathsf{rslide}_{k}$ are symmetric and the statement follows as in (i).
    
(iii) It is enough to note that $\mathsf{slide}_k$ guarantees that after the operation, $k$-th cut edges will be non-increasing from left to right. Then by observing the symmetry in (ii) and the properties shown in (i), one can see that 
%$\mathsf{step}^{-1}_k$ is an inverse of $\mathsf{step}_k$, and therefore 
$\mathsf{rslide}_k$ operates in the reverse direction as $\mathsf{slide}_k$ and results in the initial path system.
\end{proof}

\begin{definition}[Transpose operation]
Define the operation $\mathsf{transpose}_k$ as follows:
%\begin{itemize}
    %\item[(1)] 
    
    (1) If there is no pair of paths intersecting on the plane $z = k$ such that one of them has a sink on this plane, then do nothing.
    %\item[(2)] 
    
    (2) Otherwise, let us consider intersection points between the paths with sinks on the plane $z = k$ and projections of other paths on $z = k$; among these intersections let $C$ be the leftmost and lowest point. Let $P_i$ and $P_j$ be paths $(i < j)$ whose projections pass through $C$ (so that $P_i$ has a sink on $z = k$).
    %choose the  leftmost (if there are several, then also the lowest) point $C$ which is an intersection point of projections of paths to the plane $z = k$, and let $P_i$ and $P_j$ ($i < j$) are paths passing through this point. 
    Assume that $P_j$ has a sink on the plane $z = k'$ for $k' \leq k$ and let $P_i: A_{\ell} \to B_i$,  $P_j: A_r \to B_j$. Let also $C'$ be the point of $P_j$ that projects to $C$ on the plane $z = k$ (preimage of $C$); $E_i$ be the first common point of $P_i$ with the plane $z = k$, and similarly define the point $E_j$ for the path $P_j$; let $F_j$ be the first common point of $P_j$ and the plane $z = k'$. The defined points split the path $P_i$ into three parts and $P_j$ into four parts. Let us change $P_i$ and $P_j$ as follows: %(see Fig.~\ref{fig:transpose}):
    \begin{align*}
        & P_i := P_{i}(A_{\ell} \to E_i)\, P_{j}(E_j \to F'_j)\, P_{i}(E_i \to C)\, P_{j}(C' \to B_j),\\
        & P_j := P_{j}(A_r \to E_j)\, P_{j}(F_j \to C')\, P_{i}(C \to B_i).
    \end{align*}
    %Here $+$ is the concatenation of paths (consider edges as vectors). 
    See Fig.~\ref{fig:transpose}.
%\end{itemize}

\begin{figure}
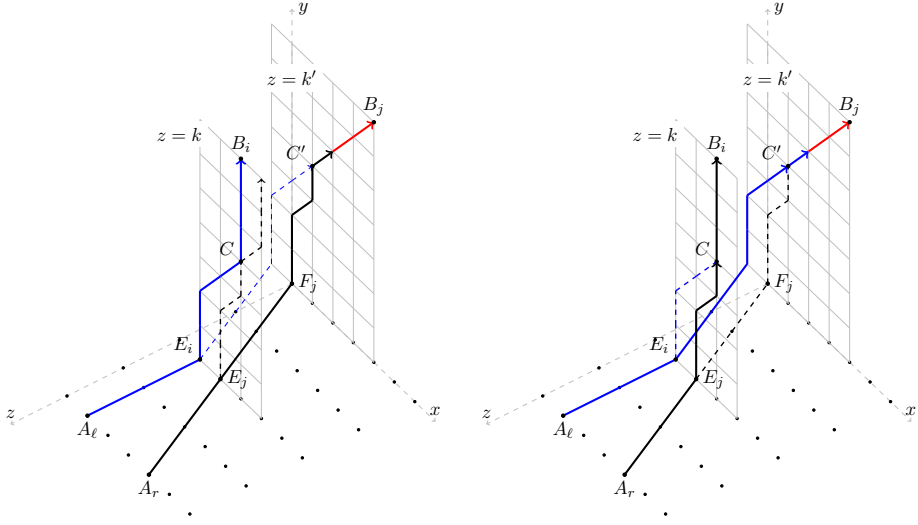
%[h]
        \begin{center}
            \centering
            \begin{minipage}{0.4\textwidth}
                \centering
                \resizebox{\textwidth}{!}{
                    \picturetransBefore
                }
            \end{minipage}
            \begin{minipage}{0.4\textwidth}
                \centering
                \resizebox{\textwidth}{!}{
                    \picturetransAfter
            }
            \end{minipage}
        \caption{An example of $\mathsf{transpose}_k$: before (left) and after (right). Note that wall edges (red) corresponding to cut edges between planes $z = k'$ and $z = k$ are unaffected.}   
        \label{fig:transpose}
        \end{center}
    \end{figure}
\end{definition}

\begin{lemma}\label{transpose}
The operation $\mathsf{transpose}_k$  is an involution.  %$$transpose_{k}^2(\mathbf{P}) = \mathbf{P}$$
\end{lemma}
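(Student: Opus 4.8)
The plan is to prove that $\mathsf{transpose}_k \circ \mathsf{transpose}_k$ is the identity on every path system $\mathbf{P}$. If no pair of paths intersects on $z = k$ with one of them having a sink on that plane, the operation does nothing and there is nothing to check, so I assume the selection in the definition succeeds and produces the point $C$ together with the two paths $P_i : A_\ell \to B_i$ and $P_j : A_r \to B_j$ (with $i < j$, $P_i$ having its sink on $z = k$ and $P_j$ having its sink on $z = k' \le k$). I would first verify that the output $\mathbf{P}'$ is again an admissible path system: the reglued curves use only lattice steps already present in the two original paths (merely translated, which is what the primed points such as $F'_j$ record), they carry nonzero weights, and, since the swap is localized at $C$ and along the floor, the two new curves are mutually disjoint and disjoint from the untouched paths. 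The net effect on endpoints is that each path keeps its source while the two sinks are exchanged, $B_i \leftrightarrow B_j$; in particular $\mathsf{transpose}_k$ multiplies the underlying permutation by the transposition $(i\,j)$, which is the sign reversal used elsewhere.

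Next I would pin down the data selected by the second application. After the swap, the unique modified path whose sink lies on $z = k$ is the one ending at $B_i$ (call it $Q$, with source $A_r$), while the other modified path ends at $B_j$ on $z = k'$ (call it $R$, with source $A_\ell$); since $i < j$, the second application groups $Q$ and $R$ with $Q$ now playing the role formerly held by $P_i$. The key claims are that the leftmost-and-lowest qualifying intersection on $z = k$ is still the point $C$, and that $Q$ and $R$ are exactly the paths meeting there. Persistence of $C$ is forced by the fact that the wall segment $P_i(C \to B_i)$ survives verbatim inside $Q$, so $Q$ still passes through $C$ on $z = k$, while the segments spliced into $R$ near $C$ reproduce the former trajectory of $P_i$, so that the projection of $R$ again crosses $Q$ at $C$. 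Minimality of $C$ I would establish using the observation recorded in Fig.~\ref{fig:transpose}: the wall (cut) edges lying strictly between $z = k'$ and $z = k$ are untouched by the operation, so the projection data onto $z = k$ that order the candidate intersection points are unchanged below and to the left of $C$; combined with the minimality of $C$ for $\mathbf{P}$, this forces $C$ to remain the leftmost-lowest qualifying intersection for $\mathbf{P}'$.

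With the selection reproduced, the remaining step is formal. I would decompose $P_i$ and $P_j$ along the cut points $E_i, E_j, F_j, C, C'$ into the segments listed in the definition, and check that reapplying the reassembly rule to $Q$ and $R$ (cut at the same points, which by the previous paragraph are recovered) reglues these segments back into the original $P_i$ and $P_j$; the translations encoded by the primed points are self-inverse because each moves a fixed floor segment by a vector and then by its negative. I expect the middle step --- showing that the second application re-selects the same point $C$ and the same pair with exchanged roles --- to be the main obstacle, since $\mathsf{transpose}_k$ is genuinely non-local: it alters the two paths everywhere between the floor and the wall $z = k$, so neither persistence nor minimality of $C$ can be read off from a bounded neighbourhood. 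Isolating the invariant projection data (the untouched cut edges between $z = k'$ and $z = k$) is what makes this tractable and reduces the argument to the familiar tail-swap involution of the LGV lemma performed at the canonical point $C$.
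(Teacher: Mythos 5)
Your proof is correct and takes essentially the same route as the paper: the paper's entire proof is the one-line observation that the path projections on the plane $z = k$ are unchanged by $\mathsf{transpose}_k$ (the two curves involved merely exchange their portions at $C$), which is precisely the invariance you isolate to show that the second application re-selects the same point $C$ and the same pair, after which the splice undoes itself. One side remark: your auxiliary claim that the output system is ``mutually disjoint and disjoint from the untouched paths'' is neither needed for the involution property nor true in general --- restoring nonintersection is the job of the subsequent $\mathsf{rslide}$ operations inside the map $\phi$ of Lemma~\ref{involution} --- but dropping it does not affect your argument.
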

\begin{proof}
Observe that before applying the $\mathsf{transpose}_{k}$ and after applying it, 
the path projections on the plane $z = k$ remain the same. %, the picture at $z = k$ does not change, therefore choice of point will be repeated and appropriate pair of paths will be transposed. One can verify, that applying $transpose_{k}$ to same pair $P_i$ and $P_j$ will result in initial path system.
\end{proof}

We are now ready to state the definition of the main involution.
\begin{definition}[The sign-reversing weight-preserving involution]\label{definvolution}
Denote 
\begin{align*}
    {s}_i & := \mathsf{slide}_i \cdot \mathsf{slide}_{i-1} \cdot \ldots \cdot \mathsf{slide}_0 \quad\text{ and }\quad %&& i = 0,\dots \ell(\lambda)-1\\.
    {s}^{-1}_i := \mathsf{rslide}_0 \cdot \mathsf{rslide}_{1} \cdot \ldots \cdot \mathsf{rslide}_i
\end{align*}
The operation ${s}_i$ expresses sequential sliding starting from the plane $z = 0$ all the way to the plane $z = i+1$, where $i \in [0,\ell(\lambda) - 1]$. Define the map $\phi : {N}(\mathbf{A}, \mathbf{B}) \to {N}(\mathbf{A}, \mathbf{B})$ as follows. Let $\mathbf{P} \in {N}(\mathbf{A}, \mathbf{B})$. 
Choose minimal index $k \in [0,\ell(\lambda)-1]$ such that ${s}^{-1}_k \cdot \mathsf{transpose}_{k+1} \cdot {s}_k (\mathbf{P}) \neq \mathbf{P}$. If there is no such $k$, then $\phi(\mathbf{P}) := \mathbf{P}$. Otherwise, set 
$$\phi(\mathbf{P}) := {s}^{-1}_k \cdot \mathsf{transpose}_{k+1} \cdot {s}_k (\mathbf{P}).$$ 
Denote by $I_\phi$ the set of fixed points of $\phi$, called {\it good} path systems. 
\end{definition}

An example  for the involution $\phi$ is shown in Fig.~\ref{fig:torpp}.

\begin{lemma}\label{involution}
We have: $\phi$ is a sign-reversing weight-preserving involution and  %$\sigma: \mathbf{N} \to \mathbf{N}$
\begin{align}\label{sump}
w(\mathbf{A} \to \mathbf{B}) = \sum_{\mathbf{P} \in I_\phi} w(\mathbf{P}).
\end{align}
%where $I$ is the set of fixed points $\sigma$, called \textit{good path systems}.
\end{lemma}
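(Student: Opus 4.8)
The plan is to unpack the three assertions folded into the statement—weight-preservation, the involution identity $\phi^2=\mathrm{id}$, and sign-reversal—and then deduce \eqref{sump} from the involution principle. I would begin with weight-preservation, which reduces to the elementary operations. Because every edge weight depends only on the $y$- and $z$-coordinates of its endpoints and never on the $x$-coordinate, the rightward $x$-shift performed inside $\mathsf{step}_k$ alters no weight; and the edge exchange in $\mathsf{step}_k$ trades the unordered pair $\{1,x_s\}$ on one path and $\{t_{k+1},1\}$ on the other, leaving the product $w(\mathbf{P})$ over the whole system unchanged. Hence $\mathsf{slide}_k$ and $\mathsf{rslide}_k$ preserve $w$. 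For $\mathsf{transpose}_{k}$ I would invoke the observation behind Lemma~\ref{transpose}: it merely recombines existing path segments into two new paths without creating or deleting edges, so the multiset of edges—and thus $w(\mathbf{P})$—is untouched. Since $\phi$ is a composite of these maps, it is weight-preserving.

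The involution property is the crux. The first ingredient is that on the relevant domains $s_k$ and ${s}^{-1}_k$ are mutually inverse: applying Lemma~\ref{slide}(iii) plane by plane gives ${s}^{-1}_k s_k=\mathrm{id}$ on systems with no intersections below $z=k$, and the symmetric reverse statement (the slide/rslide symmetry used in Lemma~\ref{slide}(ii)) gives $s_k {s}^{-1}_k=\mathrm{id}$ on systems with no intersections on $z=k+1$ and non-increasing $k$-th cut edges—precisely the form of $\mathsf{transpose}_{k+1}\,s_k(\mathbf{P})$. Granting this, and writing $k$ for the minimal index selected for $\mathbf{P}$, I would compute
\begin{align*}
\phi^2(\mathbf{P}) &= {s}^{-1}_k\,\mathsf{transpose}_{k+1}\,(s_k {s}^{-1}_k)\,\mathsf{transpose}_{k+1}\,s_k(\mathbf{P})\\
&= {s}^{-1}_k\,\mathsf{transpose}^2_{k+1}\,s_k(\mathbf{P}) = \mathbf{P},
\end{align*}
using that $\mathsf{transpose}_{k+1}$ is an involution (Lemma~\ref{transpose}). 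The point demanding care—and what I expect to be the main obstacle—is that $\phi$ really selects the \emph{same} minimal index $k$ when applied to $\mathbf{Q}:=\phi(\mathbf{P})$. I would establish this from a locality claim: $\phi$ leaves the part of the system on the planes $z\le k$ unchanged, because both $\mathsf{transpose}_{k+1}$ and the cancelling slides $s_k,{s}^{-1}_k$ only modify edges on or above $z=k$. Given locality, for every $j<k$ the configuration $s_j(\mathbf{Q})$ agrees with $s_j(\mathbf{P})$ on the data seen by $\mathsf{transpose}_{j+1}$, so that operation still does nothing, while for $j=k$ the displayed computation gives ${s}^{-1}_k\mathsf{transpose}_{k+1}s_k(\mathbf{Q})=\mathbf{P}\neq\mathbf{Q}$; hence $k$ is minimal for $\mathbf{Q}$ as well and $\phi(\mathbf{Q})=\mathbf{P}$.

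For sign-reversal I would first check that $\mathsf{slide}_k$ and $\mathsf{rslide}_k$ preserve the source–sink assignment: each $\mathsf{step}_k$ keeps every path attached to its sink $B_i$ and to its source, so the permutation $\sigma$—and therefore $\sgn(\mathbf{P})$—is unchanged. By contrast $\mathsf{transpose}_{k+1}$ swaps the sources feeding the two sinks $B_i,B_j$, i.e.\ multiplies $\sigma$ by a transposition and flips the sign. Thus whenever $\phi(\mathbf{P})\neq\mathbf{P}$—exactly when the transpose acts nontrivially—we have $\sgn(\phi(\mathbf{P}))=-\sgn(\mathbf{P})$ together with $w(\phi(\mathbf{P}))=w(\mathbf{P})$, so $\phi$ is a sign-reversing, weight-preserving involution.

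Finally I would read off the formula. In the signed multi-enumerator \eqref{wdef} the paired non-fixed systems $\{\mathbf{P},\phi(\mathbf{P})\}$ contribute opposite signs and equal weights and so cancel, leaving $w(\mathbf{A}\to\mathbf{B})=\sum_{\mathbf{P}\in I_\phi}\sgn(\mathbf{P})\,w(\mathbf{P})$. To reach the unsigned right-hand side of \eqref{sump}, I would show that every good path system carries the identity permutation, whence $\sgn(\mathbf{P})=+1$: if $\sigma$ had an inversion, then after the appropriate sliding some pair of paths would intersect with one of them having a sink on that plane, producing a transposable intersection on which $\phi$ acts—contradicting fixedness. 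This last point dovetails with the subsequent correspondence between good path systems and skew RPPs, and completes the proof.
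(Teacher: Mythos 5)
Your overall strategy coincides with the paper's: weight preservation is checked on the elementary moves, the involution property is assembled from Lemma~\ref{slide}(i)--(iii) together with Lemma~\ref{transpose}, sign reversal comes from the fact that slides never change the source-to-sink matching while the transpose multiplies it by a transposition, and \eqref{sump} follows by cancelling non-fixed pairs and checking that good systems carry the identity permutation. Your treatment of weight preservation and of sign reversal is correct and in fact more explicit than the paper's (which asserts these points without comment), and you rightly isolate the real crux, which the paper hides behind ``it is then clear'': that $\phi$ applied to $\mathbf{Q}=\phi(\mathbf{P})$ selects the \emph{same} minimal index $k$.

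However, the locality claim you use to settle that crux is false as stated. It is not true that ``$\phi$ leaves the part of the system on the planes $z\le k$ unchanged'': the transpose re-attaches the floor runs lying above the plane $z=k+1$ to different sinks, so after $s_k^{-1}$ the path ending at $B_j$ (whose sink lies on a plane $z=k'\le k$) has a different source, a different floor part, and a different wall part than it had in $\mathbf{P}$. The paper's own worked example of $\phi$ shows this concretely: there $k=0$, and the path into the sink $B_3$ on the plane $z=0$ changes from $A_2\to B_3$ to $A_3\to B_3$, so the configuration on the plane $z=k$ is visibly altered. (There is also an orientation slip: the operations $s_k$, $\mathsf{transpose}_{k+1}$, $s_k^{-1}$ modify precisely the parts of paths with $z$-coordinate at most $k+1$; what survives is the multiset of floor segments with $z\ge k+1$, redistributed between the two transposed paths.) A related imprecision occurs in your description of the domain on which $s_k s_k^{-1}=\mathrm{id}$ is applied: $\mathsf{transpose}_{k+1}\,s_k(\mathbf{P})$ is \emph{not} free of intersections on the plane $z=k+1$ --- the transpose preserves the union of projections, so the crossing at $C$ survives --- and the hypothesis must instead be phrased for the paths the rslides actually move, namely those with sinks on the planes $z\le k$, which do remain mutually nonintersecting there. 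What your argument needs, and what a repaired proof should establish, is the weaker invariant that the \emph{projection} data seen by $\mathsf{transpose}_{j+1}$ for $j<k$ is restored: Lemma~\ref{transpose}'s proof gives invariance of projections on the plane of the transpose, and combining this with the slide/rslide inverse property (Lemma~\ref{slide}(ii),(iii)) one argues that the intersection pattern of projections on each lower plane $z=j+1$ in $s_j(\mathbf{Q})$ agrees with that of $s_j(\mathbf{P})$, which is intersection-free by the minimality of $k$ for $\mathbf{P}$. With the claim corrected in this form, your computation of $\phi^2=\mathrm{id}$ and the rest of the argument go through and match the paper's intended proof.
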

\begin{proof}
Let $\mathbf{P} \in {N}(\mathbf{A}, \mathbf{B})$ and suppose $\mathbf{P}$ is not good.  Observe that the map $\phi$ works as follows.
%\begin{enumerate}
    %\item 
    First, we project the paths of $\mathbf{P}$ whose sinks are on the planes $z = 0,\ldots,k$, to the plane $z = k + 1$ (by applying the operation $s_k$).
    %\item 
    By Lemma~\ref{slide} (i), the resulting projections are nonintersecting on the plane $z = k+1$.
    %\item 
    By the choice of $k$, the paths whose sinks are on the plane $z = k + 1$, have intersections with the projected paths (from the planes $z = 0, \ldots, k$); we then perform the transposition operation on certain pair of paths intersecting on $z = k + 1$ (by applying the operation $\mathsf{transpose}_{k+1}$).
    %\item 
    The operation $s_k$ guarantees non-increasing order of cut edges (from left to right), and $\mathsf{transpose}_{k+1}$ preserves this property.
    %\item 
    Then we perform backward slides preserving the absence of path intersections (by applying the operation $s^{-1}_{k}$).
    %\item 
    By Lemma~\ref{slide} (ii), the backward slides guarantee that the paths will not intersect. %(i.e. have no common vertices).
    %\item 
    Now, by Lemma~\ref{slide} (iii) and Lemma~\ref{transpose} it is then clear that $\phi$ is  a  sign-reversing weight-preserving involution.
%\end{enumerate}
Notice also that good path systems correspond to the identity permutation and hence have positive sign (otherwise the paths creating an inversion must have intersection on at least one of the planes containing the sinks of these paths). Hence \eqref{sump} follows.
\end{proof}
\begin{figure}
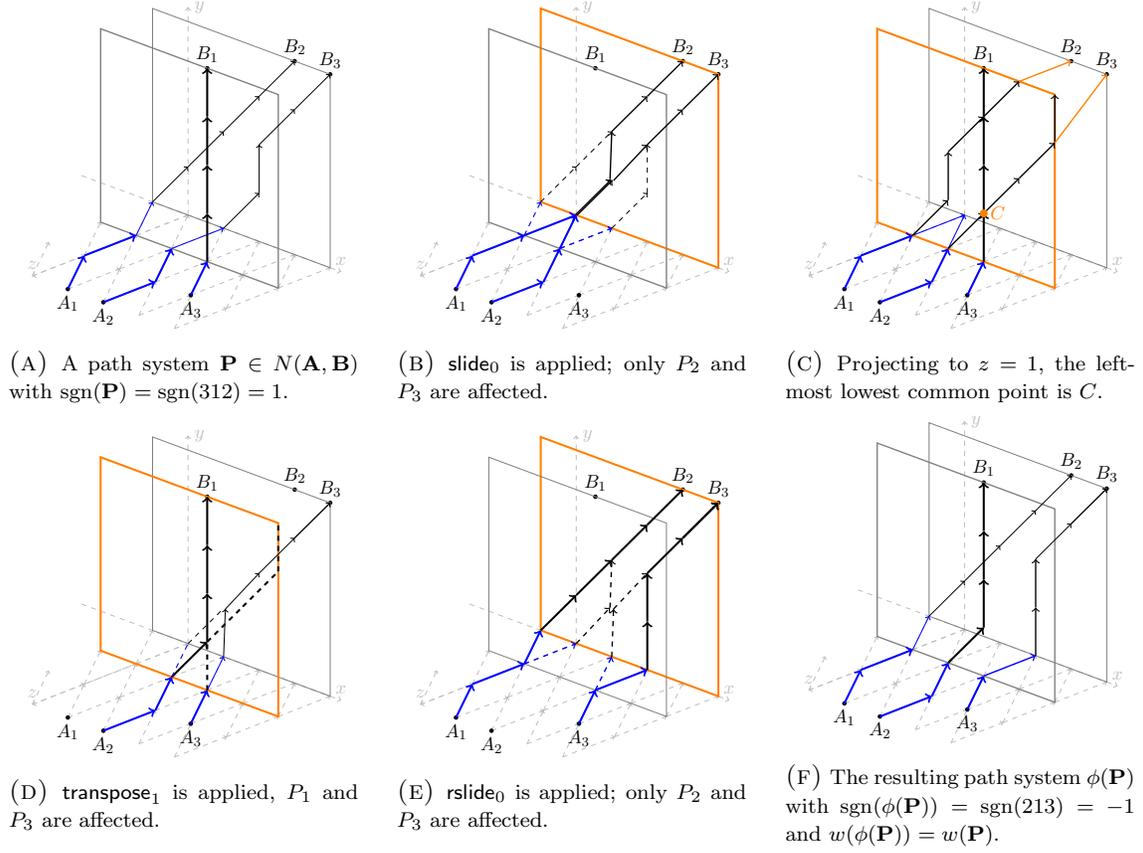
%[h]
    \begin{center}
        \centering
             \begin{minipage}{0.3\textwidth}
                \centering
                \resizebox{\textwidth}{!}{
                    \pictureInvolutionInit
                }
                \subcaption{\scriptsize A path system $\mathbf{P} \in N(\mathbf{A}, \mathbf{B})$ with $\sgn(\mathbf{P}) = \sgn(312) = 1$. } % and $w(\mathbf{P}) = t_1 t_2 t_3 x_1^2 x_2^2 x_3 x_4^2$
            \end{minipage}
            \quad 
            \begin{minipage}{0.3\textwidth}
                \centering
                \resizebox{\textwidth}{!}{
                    \pictureInvolutionSlide
                }
                \subcaption{\scriptsize $\mathsf{slide}_0$ is applied; only $P_2$ and $P_{3}$ are affected. }
            \end{minipage}
            \quad
            \begin{minipage}{0.3\textwidth}
                \centering
                \resizebox{\textwidth}{!}{
                    \pictureInvolutionProject
            }
            \subcaption{\scriptsize Projecting to $z=1$, the leftmost lowest common point is $C$.}% for further $\mathsf{transpose}_{1}$. }
            \end{minipage}
            \quad
            \begin{minipage}{0.3\textwidth}
                \centering
                \resizebox{\textwidth}{!}{
                    \pictureInvolutionTranspose
            }
            \subcaption{\scriptsize $\mathsf{transpose}_{1}$ is applied, $P_{1}$ and $P_{3}$ are affected. }
            \end{minipage}
            \quad
            \begin{minipage}{0.3\textwidth}
                \centering
                \resizebox{\textwidth}{!}{
                    \pictureInvolutionRevslide
            }
            \subcaption{\scriptsize $\mathsf{rslide}_{0}$ is applied; only $P_{2}$ and $P_{3}$ are affected. }
            \end{minipage}
            \quad
            \begin{minipage}{0.3\textwidth}
                \centering
                \resizebox{\textwidth}{!}{
                    \pictureInvolutionResult
            }
            \subcaption{\scriptsize The resulting path system $\phi(\mathbf{P})$ with $\sgn(\phi(\mathbf{P})) = \sgn(213) = -1$ and $w(\phi(\mathbf{P})) = w(\mathbf{P})$. }
            \end{minipage}
        \caption{An example showing how the involution $\phi$ applies. Here we have $\phi(\mathbf{P}) = s^{-1}_0 \cdot \mathsf{transpose}_1 \cdot s_0 (\mathbf{P}) = \mathsf{rslide}_0 \cdot \mathsf{transpose}_1 \cdot \mathsf{slide}_0 (\mathbf{P})$.}
        \label{fig:torpp}
    \end{center}
\end{figure}

\subsection{Final step: the correspondence between good path systems and skew RPP}

Let us now take any good path system $\mathbf{P}$. We are going to describe how to obtain a unique RPP with the same $(\mathbf{t},\mathbf{x})$ weight. %Consequently perform the operations $\mathsf{slide}_0, \ldots, \mathsf{slide}_{\ell(\lambda) - 1}$ to $\mathbf{P}$.

Denote by $\mathbf{P'}$ the path system obtained from $\mathbf{P}$ by (slightly) changing the sources to the points $F_i = (i, 0, \lambda'_i)$ and adding single steps $F_i \to A_i = (i-1,0,\lambda'_i - 1)$. Let us consequently perform the operations $\mathsf{slide}_0, \ldots, \mathsf{slide}_{\ell(\lambda) - 1}$ on it.
On each operation $\mathsf{slide}_{k}$ during this process, let us keep track of the $k$-th cut edges belonging to % i.e. edges going beyond 
the planes $x = i$ for $i = \mu_{k+1} + 1, \ldots, \lambda_{k+1}$, when we project paths from $z = k$ to $z = k + 1$. %(as in Fig.~\ref{fig:projection}~(b)).
Note that after the operation $\mathsf{slide}_{k}$, $k$-th cut edges do not change in further slide operations. 
% Then for resulting path system recall the intersection points $T_{i, k}$ defined in Def. \ref{defproj} (cf. Fig.~\ref{fig:projection}). The points $T_{i,k}$ (for fixed $i$ and $k \in [\mu'_i, \lambda'_i - 1]$) form a path $R_i$ on the plane $x = i$ from the point $B_i$ down to the point $F_i = (i, 0, \lambda'_i)$, whose vertical edges correspond to weights $1$, horizontal edges correspond to weights $t$, and diagonal edges correspond to weights $x$ (these are cut edges). 
Then for $i$ fixed, %path $P'_i$ of $\mathbf{P'}$, 
$k$-th cut edges (for $k \in [\mu'_i, \lambda'_i - 1]$) belong to the path $R_i$ on the plane $x = i$  from the point $B_i$ down to $F_i$, formed by following cut edges and (vertical) edges $T_{i,k} T'_{i,k}$ (where type~1 cut edges have $t$-weights, type~2 cut edges have $x$-weights, and vertical edges have weights $1$, see Fig.~\ref{fig:projection}~(b)).

Given the path $R_i$, let us record the $i$-th column (for $i= 1, \ldots, n$) of a skew RPP of shape $\lambda/\mu$ as follows: from top to bottom, we add an entry $s$ if we see an edge of weight $x_s$ (type~2 cute edge) and add an empty entry if we see an edge of $t$ weight (type~1 cut edge). The resulting filling might have some empty entries. To form a proper RPP we then fill each empty entry by the first nonempty entry below it in the same column. The resulting tableau is an RPP with the weight $\prod_i x_i^{c_i} \prod_j t_j^{d_j}$ which is also the weight of $\mathbf{P}$. Since each operation $\mathsf{slide}_k$ preserves the non-increasing order of cut edges at this step, we have $R_i$ is not below $R_{i+1}$ (by $y$-coordinate), which explains why it is a proper RPP.
%Since $P_i$ is to the left of $P_{i+1}$ after each $\mathsf{slide}_k$ we have $R_i$ is not below $R_{i+1}$, which shows that it is a proper RPP. 
For an example of this procedure, see Fig.~\ref{fig:torpp}.

Conversely, given any RPP of shape $\lambda/\mu$ we are now describing an inverse procedure how to reconstruct $\mathbf{P}$.
From given RPP it is easy restore the paths $R_i: B_i \to F_i$ by reading its $i$-th column. Note that in our lattice there is a unique path $P'_i$ of weight $w(R_i)$ from $F_i$ to $B_i$. %and let $P'_i$ be such a path. 
%Since paths $P'_i$ do not intersect at the initial plane $z = \ell(\lambda)$ ($n$ vertical lines)
Since $R_i$ is never lower than $R_{i+1}$ for all $i$, $k$-th cut edges will not increase (from left to right) for each $k$. Combining this with the fact that $P'_i$ do not intersect on the initial plane $z = \ell(\lambda)$ ($n$ vertical lines), one deduces with Lemma~\ref{slide} that consequent application of the inverse operations $\mathsf{rslide}_{\ell(\lambda)-1}$,\ldots, $\mathsf{rslide}_{0}$ to $\mathbf{P'}$ %(and removing extra edge) 
will result in a good path system $\mathbf{P}$ of the same weight.
This completes the proof.
%\
\begin{figure}%[h]
        \begin{center}
            \centering
            \begin{minipage}{0.4\textwidth}
                \centering
                \resizebox{\textwidth}{!}{
                    \pictureslideI
                }
                \subcaption{{\scriptsize Initial good path system $\mathbf{P}$.  }}
            \end{minipage}
            \qquad 
            \begin{minipage}{0.4\textwidth}
                \centering
                \resizebox{\textwidth}{!}{
                    \pictureslideA
                }
                \subcaption{\scriptsize Before $\mathsf{slide}_0$ only $P_4$ is affected. }
            \end{minipage}
            %\quad
            \begin{minipage}{0.4\textwidth}
                \centering
                \resizebox{\textwidth}{!}{
                    \pictureslideB
            }
            \subcaption{\scriptsize $\mathsf{slide}_0$ applied, project on  $z = 1$. Cut edges (orange) of each slide are shown on $x = i$. %Dashed, dotted paths is the result of $\mathsf{step}_1(P_2,P_3)$. 
            }
            \end{minipage}
            \qquad
            \begin{minipage}{0.4\textwidth}
                \centering
                \resizebox{\textwidth}{!}{
                    \pictureslideC
                }
                \subcaption{\scriptsize $\mathsf{slide}_1$ applied, project on  $z = 2$. %Cut edges (oragne) of each slide are shown on $x = i$. 
                The gray path results from $\mathsf{step}_2(P_2,P_3)$.}
            \end{minipage}
            %\quad
            \begin{minipage}{0.4\textwidth}
                \centering
                \resizebox{\textwidth}{!}{
                    \pictureslideD
                }
                \subcaption{\scriptsize $\mathsf{slide}_2$ applied,  project on $z = 3$. \\ ~ }
            \end{minipage}
            \qquad
            \begin{minipage}{0.4\textwidth}
                \centering
                \resizebox{\textwidth}{!}{
                    \pictureslideE
                }
                \subcaption{\scriptsize $\mathsf{slide}_3$ and $\mathsf{slide}_4$ applied. Each orange path (on some $x = i$) converts to a column of RPP.}
            \end{minipage}
            \begin{minipage}{0.3\textwidth}
                \centering
                {\scriptsize
                \begin{align*}
                    \ytableaushort{
                        \none\none\none2,
                        \none11{},
                        {}{}34,
                        134,
                        2
                    } * {3 + 1, 1 + 3, 4, 3, 1}
                \end{align*}
                }
                \subcaption{\scriptsize A reduced RPP: empty boxes of $t$ weights.}
            \end{minipage}
            \begin{minipage}{0.3\textwidth}
                \centering
                {\scriptsize
                \begin{align*}
                    \ytableaushort{
                        \none\none\none2,
                        \none114,
                        1334,
                        134,
                        2
                    } * {3 + 1, 1 + 3, 4, 3, 1}
                \end{align*}
                }
                \subcaption{\scriptsize The resulting RPP. \\ ~}
            \end{minipage}
        \caption{From a good path system to RPP. }
        \label{fig:torpp}
        \end{center}
    \end{figure}
    
%\subsection*{Few remarks}
{%\small
\begin{remark}
As %in the final step of our proof, 
lattice path systems can be converted to RPP (cf.~Fig~\ref{fig:torpp}(F)--(H)), it can be seen that our slide operations on paths are similar to jeu de taquin type slides on tableaux. 
\end{remark}
\begin{remark}
Our proof is new even in the straight case shape $\mu = \varnothing$. The proof in \cite{dy} relied on planar construction combined with the Schur expansion of $g_{\lambda}$ from \cite{lp} (based on RSK). Here, in this special case, the sinks are all on the plane $z = 0$, all nonintersecting path systems are good (so there is no need to apply the involution), and we then apply sequential slide transformations to get RPP's.
\end{remark}
\begin{remark}
It would be interesting to see what other applications can be obtained from the operations and  constructions which we defined. It would also be interesting to see other 3d lattice path systems with similar positivity properties. 
\end{remark}
}

\section{The dual formula}
Set $h_0 = 1$ and $h_i = 0$ for $i < 0$. Let $\Lambda$ be the ring of symmetric functions. Define the automorphism $\varphi : \Lambda \to \Lambda$ via the generators of complete homogeneous symmetric functions $\{h_n \}$ as follows:
$$
\varphi(h_n) = h_n + h_{n-1} + \cdots + h_0, \quad n \ge 1.
$$
It is easy to check that we have 
%$\varphi^\ell$ for $\ell \in \mathbb{Z}$ acting on the functions $h_n$ as follows
$$
\varphi^\ell h_n = \sum_{k = 0}^{n} \binom{\ell + k - 1}{k} h_{n - k}, \quad \ell, n \in \mathbb{Z}.
$$
%For example, $\varphi^{0} h_n = h_n$, $\varphi^{1} h_n = h_n + h_{n-1} + \cdots + h_{0}$ (where $h_0 = 1$), $\varphi^{-1} h_n = h_n - h_{n-1}$.\footnote{Basically, $\varphi f(n) = \sum_{i = 0}^{n} f(i)$ is a sum and $\varphi^{-1} f(n) = f(n) - f(n-1)$ is a finite difference operator.} 
Note also that $\varphi^{-1} h_n = h_n - h_{n-1}$ and for $\ell \ge 0$ we have $\varphi^{\ell} h_n(\mathbf{x}) = h_n(1^\ell, \mathbf{x})$.

\begin{theorem}\label{dual}
Let $n \ge \ell(\lambda), \ell(\mu)$. The following determinantal identity holds
\begin{align}\label{tree}
g_{\lambda/\mu} = \det\left[\varphi^{i - j} h_{\lambda_i - i -\mu_j + j} \right]_{1 \le i, j \le n}. 
\end{align}
\end{theorem}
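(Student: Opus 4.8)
The plan is to deduce Theorem~\ref{dual} from the formula \eqref{one} already proved in Theorem~\ref{main}, by showing that the two determinants --- the one built from $e$'s on the conjugate shape and the one built from $\varphi$-shifted $h$'s on the straight shape --- agree. Since Theorem~\ref{main} gives $g_{\lambda/\mu}=\det[e_{\lambda'_i-i-\mu'_j+j}(1^{\lambda'_i-1-\mu'_j},\mathbf{x})]$, it suffices to establish
\[
\det\left[\varphi^{i-j} h_{\lambda_i - i - \mu_j + j}\right]_{1\le i,j\le n} = \det\left[e_{\lambda'_i - i - \mu'_j + j}(1^{\lambda'_i - 1 - \mu'_j}, \mathbf{x})\right]_{1 \le i,j \le N},
\]
with $N\ge\lambda_1,\mu_1$. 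This is exactly the $g$-deformation of the classical equality between the Jacobi--Trudi and N\"agelsbach--Kostka determinants for $s_{\lambda/\mu}$. The starting observation is that every entry has a clean generating-function encoding: writing $H(t)=\sum_m h_m t^m$ and $E(t)=\sum_m e_m t^m$, one has $\varphi^{i-j}h_m=[t^m]\,(1-t)^{-(i-j)}H(t)$ and $e_m(1^r,\mathbf{x})=[t^m]\,(1+t)^{r}E(t)$, using $\varphi^{\ell}h_n=h_n(1^\ell,\mathbf{x})$ and the closed form for $\varphi^\ell$ recorded just above the theorem.

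The backbone I would use is the classical mechanism that turns Jacobi--Trudi into N\"agelsbach--Kostka: the bi-infinite matrices $(h_{u-v})$ and $((-1)^{u-v}e_{u-v})$ are mutually inverse (equivalently $E(-t)H(t)=1$, i.e.\ $\sum_k(-1)^k e_k h_{n-k}=\delta_{n,0}$), and Jacobi's complementary-minor theorem identifies a minor of one matrix with a signed complementary minor of the other, while the bead--gap (Maya diagram) bijection matches the index sets $\{\lambda_i-i\},\{\mu_j-j\}$ with the conjugate sets coming from $\lambda',\mu'$. I would run this argument twisted by the deformation factors $(1-t)^{-(i-j)}$ on the $h$-side and $(1+t)^{\lambda'_i-1-\mu'_j}$ on the $e$-side. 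What makes the twist compatible is the telescoping computation $(1-t)^{-p}H(t)\cdot(1-t)^{q}E(-t)=(1-t)^{q-p}$, so that the cross terms of the inverse relation collapse and the deformation exponents are transported by, rather than destroyed by, the complementary-minor duality.

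The hard part will be that the $\varphi$-power $i-j$ depends on the row/column \emph{ordinals}, not merely on the lattice positions $\lambda_i-i$ and $\mu_j-j$; under the bead--gap map this amounts to saying that the deformation is sensitive to the entire partition, not just to an entry's content. Because $\varphi$ is a non-scalar ring automorphism, it cannot be factored out of the determinant row-by-row or column-by-column: the Leibniz expansion shows that although the powers $i-\sigma(i)$ sum to $0$ over each permutation $\sigma$, the product $\prod_i \varphi^{\,i-\sigma(i)}h_{\lambda_i-i-\mu_{\sigma(i)}+\sigma(i)}$ does \emph{not} collapse to $\prod_i h_{\lambda_i-i-\mu_{\sigma(i)}+\sigma(i)}$, and this failure is precisely what produces the inhomogeneous lower-order terms distinguishing $g_{\lambda/\mu}$ from $s_{\lambda/\mu}$. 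The crux is therefore to verify that, under the conjugation bijection, the $\varphi$-exponent $i-j$ attached to a selected entry matches exactly the appended-ones count $\lambda'_a-1-\mu'_b$ of the complementary entry (with $a,b$ the ranks produced by the bead--gap map). This is a bookkeeping identity on shifted contents that one checks directly --- it is already visible in small cases such as $\lambda=(2,1)$ --- and it is what upgrades the classical duality to the deformed one; a final routine step pins down the overall sign from the complementary-minor theorem and the permutation relating the two index orderings.

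As a fallback that sidesteps the Maya-diagram bookkeeping, one can instead induct on $|\lambda/\mu|$, expanding the $h$-determinant along a row or column and using $\varphi^{-1}h_n=h_n-h_{n-1}$ together with Pieri-type relations for $g_{\lambda/\mu}$ to reduce to smaller skew shapes. I expect the complementary-minor route to be the cleaner of the two, keeping the inductive argument in reserve should the sign and exponent matching prove delicate.
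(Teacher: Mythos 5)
Your high-level plan---deducing Theorem~\ref{dual} from Theorem~\ref{main}---is also the paper's plan, and your generating-function encodings of the entries are correct; but the duality mechanism you propose has a genuine gap, and it sits exactly at the step you dismiss as ``a bookkeeping identity \ldots that one checks directly.'' Jacobi's complementary-minor theorem requires a \emph{fixed} pair of mutually inverse matrices: both determinants must appear as minors, on complementary index sets, of matrices $A$, $B$ with $AB=I$ that do not depend on which minor is being taken. In your setting no such pair exists, because the twists $(1-t)^{j-i}$ on the $h$-side and $(1+t)^{\lambda'_i-1-\mu'_j}$ on the $e$-side depend on the row/column \emph{ordinals}, i.e.\ on the ranks of the positions $\lambda_i-i$, $\mu_j-j$ inside the full index sets, and ranks are not functions of positions. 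If you try to repair this by extending the rank functions to all lattice positions (the natural choice being $\rho(u)=\#\{u'\in R: u'>u\}+1$ for the row set $R$, and similarly $\kappa$ for the column set), the resulting matrices $A_{u,v}=[t^{u-v}](1-t)^{\kappa(v)-\rho(u)}H(t)$ and $B_{v,w}=[t^{v-w}](1-t)^{\rho(w)-1-\kappa(v)}E(-t)$ are \emph{not} inverse to each other: your telescoping identity $(1-t)^{-p}H(t)\cdot(1-t)^{q}E(-t)=(1-t)^{q-p}$ does not compute $(AB)_{u,w}$, because the inner summation index $v$ carries its own varying twist, and $\sum_v [t^{u-v}]F_v(t)\cdot[t^{v-w}]G_v(t)$ is not the coefficient of a product when $F_v,G_v$ depend on $v$. (A direct check on the window $\{0,1,2\}$ with $R=C=\{1\}$ and no $x$-variables, so $H=E=1$, already gives $AB\neq I$.) The same obstruction reappears if you factor the $h$-matrix as (ordinal twists)$\,\times\,$(Toeplitz)$\,\times\,$(ordinal twists) and expand by Cauchy--Binet: Jacobi's theorem applied to the Toeplitz middle would force a termwise correspondence between minors of the twisted side factors across complementary index sets, and that correspondence is false in general---the two expansions agree only after a global cancellation, which is precisely the statement to be proved, not an input. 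So the ``transport of exponents under the bead--gap map'' is not an entry-by-entry verification; it is the entire content of the theorem, and the complementary-minor machinery does not supply it. (Your fallback via induction and Pieri-type relations is too sketchy to assess.)

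For comparison, the paper closes exactly this gap by replacing matrix inversion with an algebra automorphism, which is insensitive to the ordinal-dependence of the twists. It invokes the involution $\tau$ of $\Lambda$ satisfying $\tau(g_{\lambda/\mu})=g_{\lambda'/\mu'}$ (Buch; Yeliussizov), determined on generators by $\tau(h_k)=g_{(1^k)}=e_k(1^{k-1},\mathbf{x})$, expands each entry $\varphi^{i-j}h_{\lambda_i-i-\mu_j+j}$ in the basis $\{h_k\}$, and verifies by a Chu--Vandermonde-type binomial identity that
\begin{align*}
\tau\bigl(\varphi^{i-j}h_{\lambda_i-i-\mu_j+j}\bigr)
= e_{\lambda_i-i-\mu_j+j}\bigl(1^{\lambda_i-1-\mu_j},\mathbf{x}\bigr),
\end{align*}
i.e.\ $\tau$ carries each entry of the $h$-determinant to the corresponding entry of the $e$-determinant of Theorem~\ref{main} for the conjugate shape $\lambda'/\mu'$. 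Applying $\tau$ entrywise, invoking Theorem~\ref{main}, and using $\tau^2=\mathrm{id}$ then yields \eqref{tree} in a few lines. This is the same conjugation duality you are aiming at, but implemented in the ring $\Lambda$ rather than through inverse matrices; if you insist on a purely determinantal proof, you must either construct an extension of the twisted $h$-matrix whose inverse literally has the twisted $e$-entries on the complementary block (no natural extension does), or find a genuinely new argument for the cross-term cancellation.
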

\begin{proof}
It is known that  that $\{g_{\lambda} \}$ is a basis of $\Lambda$ %of symmetric functions.
and there is an involutive automorphism (see \cite[Cor.~6.7]{buch}, \cite[Prop.~7.3]{dy2}) $\tau : \Lambda \to \Lambda$ such that for all $\lambda$, $\mu$ 
$$
\tau(g_{\lambda/\mu}) = g_{\lambda'/\mu'}
$$
This involution can be defined via the generators of $\Lambda$ as follows
$$
\tau : g_{(k)} = h_k \longmapsto g_{(1^k)} = e_k(1^{k-1}, \mathbf{x}) = \sum_{\ell} \binom{k-1}{k - \ell}\, e_{\ell}.
$$
Let us now expand the entries of the determinant \eqref{tree}
 $$
 \varphi^{i - j} h_{\lambda_i - i -\mu_j + j}  = \sum_{k } \binom{\lambda_i - \mu_j - k - 1}{\lambda_i - i - \mu_j + j - k} h_k.
 $$
After applying the involution $\tau$  we get the following expression
\begin{align*}
\tau(\varphi^{i - j} h_{\lambda_i - i -\mu_j + j}) &= \sum_{k} \binom{\lambda_i - \mu_j - k - 1}{\lambda_i - i - \mu_j + j - k} \sum_{\ell } \binom{k - 1}{k - \ell} e_{\ell}\\
&= \sum_{\ell} e_{\ell} \sum_{k} \binom{\lambda_i - \mu_j - k - 1}{\lambda_i - i - \mu_j + j - k} \binom{k - 1}{k - \ell}.
\end{align*}
By comparing the coefficients at $[t^{\lambda_i - i - \mu_j + j - \ell}]$ from both sides of the identity $(1 - t)^{j - i -  \ell} = (1 - t)^{j - i} (1 - t)^{-\ell}$ it is easy to see that the following identity holds
$$
\sum_{k} \binom{\lambda_i - \mu_j - k - 1}{\lambda_i - i - \mu_j + j - k} \binom{k - 1}{k - \ell} = \binom{\lambda_i - 1- \mu_j}{\lambda_i - i - \mu_j + j - \ell}.
$$
Hence 
$$
\tau(\varphi^{i - j} h_{\lambda_i - i -\mu_j + j}) = \sum_{\ell}  \binom{\lambda_i - 1- \mu_j}{\lambda_i - i - \mu_j + j - \ell}\, e_{\ell} = e_{\lambda_i - i - \mu_j + j}(1^{\lambda_i - 1- \mu_j}, \mathbf{x}).
$$
and therefore using Theorem~\ref{main} we have
$$
\tau \det\left[\varphi^{i - j} h_{\lambda_i - i -\mu_j + j} \right]_{1 \le i, j \le n}  = \det\left[\tau(\varphi^{i - j} h_{\lambda_i - i -\mu_j + j}) \right]_{1 \le i, j \le n} = g_{\lambda'/\mu'}
$$
which establishes the desired identity. 
\end{proof}

\begin{remark}
Note that entries of the matrix in this dual formula may contain linear combinations of $\{h_n \}$ with negative terms. It would be interesting to find a positive dual formula for $g_{\lambda/\mu}$. It would also be interesting to find a combinatorial proof of this dual formula.
\end{remark}

\subsection{Straight shape case}
We obviously have the following special case for $\mu = \varnothing$.
\begin{corollary}\label{cor:h1} We have
\begin{align}\label{eqh1}
g_{\lambda} = \det\left[\varphi^{i - j} h_{\lambda_i - i + j} \right]_{1 \le i, j \le \ell(\lambda)}. 
\end{align}
\end{corollary}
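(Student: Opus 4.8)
The plan is to read off \eqref{eqh1} as the special case $\mu = \varnothing$ of Theorem~\ref{dual}, and then to reduce the size of the determinant from an arbitrary $n \ge \ell(\lambda)$ down to $\ell(\lambda)$. These are the only two things that need to happen, so the argument should be short.

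First I would substitute $\mu = \varnothing$ into \eqref{tree}. Since $\ell(\varnothing) = 0$, the hypothesis $n \ge \ell(\lambda), \ell(\mu)$ of Theorem~\ref{dual} reduces to $n \ge \ell(\lambda)$; and since $\mu_j = 0$ for all $j$ while $g_{\lambda/\varnothing} = g_\lambda$, formula~\eqref{tree} becomes
\begin{align*}
g_\lambda = \det\left[\varphi^{i-j} h_{\lambda_i - i + j}\right]_{1 \le i, j \le n}, \qquad n \ge \ell(\lambda).
\end{align*}
It then remains to see that this determinant does not depend on the choice of $n \ge \ell(\lambda)$, and in particular that it equals the $\ell(\lambda) \times \ell(\lambda)$ determinant appearing in \eqref{eqh1}.

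For this I would set $\ell := \ell(\lambda)$ and examine the entries with row index $i > \ell$, where $\lambda_i = 0$, so that the $(i,j)$ entry is $\varphi^{i-j} h_{j-i}$. When $j < i$ we have $j - i < 0$, hence $h_{j-i} = 0$; because $\varphi$ is a ring automorphism it fixes $0$, so every such entry vanishes. In particular the entire lower-left block (rows $i > \ell$, columns $j \le \ell$) is zero, while the lower-right block (rows and columns both $> \ell$) is upper triangular with diagonal entries $\varphi^{0} h_0 = h_0 = 1$. Exploiting this block-triangular structure, the unipotent lower-right block contributes a factor $1$ to the determinant, which therefore collapses to the $\ell \times \ell$ minor indexed by $1 \le i, j \le \ell$; this minor is exactly the right-hand side of \eqref{eqh1}.

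There is no genuine obstacle here, which is why the statement is labeled a corollary: all of the substance lives in Theorem~\ref{dual}. The only point requiring any care is the size-reduction of the determinant, handled by the standard block-triangular argument above, resting on the two elementary facts $h_{j-i} = 0$ for $j < i$ and $h_0 = 1$ together with the automorphism property of $\varphi$.
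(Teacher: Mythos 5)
Your proposal is correct and matches the paper, which simply notes that \eqref{eqh1} is the special case $\mu = \varnothing$ of Theorem~\ref{dual}. Note that your block-triangular size-reduction step, while valid, is superfluous: Theorem~\ref{dual} holds for \emph{every} $n \ge \ell(\lambda), \ell(\mu)$, so one may simply take $n = \ell(\lambda)$ (legitimate since $\ell(\varnothing) = 0$) and obtain \eqref{eqh1} immediately.
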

There is also the following {\it positive} formula that holds for $g_{\lambda}$ (which is not that obvious from the above identity).
\begin{corollary}
 We have
\begin{align}\label{gh}
g_{\lambda}(\mathbf{x})= \det\left[\varphi^{i-1} h_{\lambda_i - i + j} \right]_{1 \le i, j \le \ell(\lambda)}  = \det\left[h_{\lambda_i - i + j}(1^{i-1}, \mathbf{x}) \right]_{1 \le i, j \le \ell(\lambda)}. 
\end{align}
\end{corollary}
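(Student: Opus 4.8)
The plan is to deduce \eqref{gh} from the formula \eqref{eqh1} already established in Corollary~\ref{cor:h1}. The second equality in \eqref{gh} is immediate: since $i-1\ge 0$, the remark preceding Theorem~\ref{dual} gives $\varphi^{i-1}h_n(\mathbf{x}) = h_n(1^{i-1},\mathbf{x})$ entrywise, so the two matrices on the right of \eqref{gh} are literally equal. Thus it suffices to prove the first equality, i.e. that
\begin{align*}
\det\left[\varphi^{i-1}h_{\lambda_i - i + j}\right]_{1\le i,j\le \ell} = \det\left[\varphi^{i-j}h_{\lambda_i-i+j}\right]_{1\le i,j\le\ell},
\end{align*}
where $\ell=\ell(\lambda)$ and the right-hand side is $g_\lambda$ by Corollary~\ref{cor:h1}. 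Write $N$ and $M$ for the left and right matrices above.

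The engine of the argument is the one-step identity
\begin{align*}
\varphi^{a-1}h_m = \varphi^{a}h_m - \varphi^{a}h_{m-1}, \qquad a\in\mathbb{Z},
\end{align*}
which follows from $\varphi^{-1}h_n = h_n - h_{n-1}$ (equivalently, from the fact that $\varphi$ acts on $\sum_n h_n t^n$ as multiplication by $(1-t)^{-1}$, so lowering the exponent of $\varphi$ multiplies by $(1-t)$). The crucial structural observation is that in $N$ the exponent in $N_{ij}=\varphi^{i-1}h_{\lambda_i-i+j}$ depends only on the row $i$, not on the column $j$. Hence the column difference $N_{ij}-N_{i,j-1}=\varphi^{i-1}h_{\lambda_i-i+j}-\varphi^{i-1}h_{\lambda_i-i+j-1}=\varphi^{i-2}h_{\lambda_i-i+j}$ lowers the $\varphi$-exponent by one uniformly across all rows.

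I would carry this out as iterated elementary column operations. Performing $C_j\leftarrow C_j-C_{j-1}$ for $j=\ell,\ell-1,\dots,2$ (right to left, so each operation uses unmodified columns) turns column $j\ge 2$ into $\left(\varphi^{i-2}h_{\lambda_i-i+j}\right)_i$ while fixing column $1$; after this sweep column $2$ already matches $M$. Repeating the sweep on the columns $j\ge 3$, then $j\ge 4$, and so on, lowers the exponent in column $j$ to $i-j$ after exactly $j-1$ sweeps, so that after $\ell-1$ sweeps the matrix becomes $M$. As every step is an elementary column operation, the determinant is unchanged, giving $\det N=\det M=g_\lambda$. Equivalently, and more compactly, expanding $\varphi^{-(j-1)}h_m=\sum_{p\ge 0}(-1)^p\binom{j-1}{p}h_{m-p}$ one checks directly that $M_{\cdot j}=\sum_{p=0}^{j-1}(-1)^p\binom{j-1}{p}N_{\cdot,j-p}$, i.e. $M=NU$ with $U$ the integer upper-unitriangular matrix $U_{kj}=(-1)^{j-k}\binom{j-1}{j-k}$; since $\det U=1$, this yields $\det N=\det M$ at once.

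The argument is short, and the only real insight is the column-independence of the $\varphi$-exponent in $N$, which is exactly what makes the telescoping column operations legal; the remaining work (the one-step identity, the bookkeeping of index ranges, and the conventions $h_0=1$, $h_n=0$ for $n<0$) is routine. The main point to watch is that the reduction proceeds by determinant-preserving operations — equivalently, that the transition matrix $U$ is unitriangular with unit determinant — rather than by applying the operator $\varphi$ to entire columns, which would not preserve the determinant.
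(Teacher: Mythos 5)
Your proof is correct and takes essentially the same approach as the paper: the paper's entire justification of \eqref{gh} is the remark that it ``follows from \eqref{eqh1} by elementary (column) transformations; we omit these details,'' and your right-to-left telescoping column sweeps (equivalently, the unitriangular factorization $M = NU$ with $\det U = 1$) supply exactly those omitted details, with the second equality handled, as in the paper, by the fact that $\varphi^{\ell}h_n(\mathbf{x}) = h_n(1^{\ell},\mathbf{x})$ for $\ell \ge 0$.
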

This formula follows from \eqref{eqh1} by elementary (column) transformations; we omit these details. Alternatively, it can be proved via a lattice-path construction similar to the one used in \cite{dy} (which is planar and simpler than our main construction here). This formula will be used for deriving one more determinantal identity given in the next section.

\section{Bialternant formula}
%Consider now the operators $\varphi^{\ell}$ for $\ell \in \mathbb{Z}$ acting on the functions $x^n$ as follows:
%$$
%\varphi^{\ell} x^{n} = \sum_{k = 0}^{n} \binom{\ell + k - 1}{k} x^{n - k}.
%$$
\begin{theorem}\label{bialt}
Let $n \ge \ell(\lambda)$. The following formula holds
\begin{align*}
g_{\lambda}(x_1, \ldots, x_n) = \frac{\det\left[h_{\lambda_i + n - i}(1^{i-1}, x_j^{})  \right]_{1 \le i,j \le n}}{\prod_{i < j} (x_i - x_j)}. 
\end{align*}
%where 
%$$
%h_{m}(1^{i-1}, x) %\varphi^{i-1} x_j^{\lambda_i + n - i} 
%= \sum_{k = 0}^{m} \binom{i + k - 2}{k} x^{m - k}.
%$$
\end{theorem}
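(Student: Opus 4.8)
The plan is to derive the bialternant identity directly from the positive Jacobi--Trudi formula \eqref{gh} by computing the Schur expansion of each side and matching coefficients. First I would fix the $n$ variables $x_1,\dots,x_n$ and pad $\lambda$ with zero parts up to length $n$: the extra rows and columns of the matrix in \eqref{gh} form a unitriangular block whose complementary off-diagonal block vanishes (since $h_{<0}=0$), so \eqref{gh} continues to hold at size $n$, giving $g_\lambda(x_1,\dots,x_n)=\det A$ with $A_{ij}=h_{\lambda_i-i+j}(1^{i-1},x_1,\dots,x_n)$. Write $M=[\,h_{\lambda_i+n-i}(1^{i-1},x_j)\,]_{1\le i,j\le n}$ for the numerator of the claimed formula. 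Since column $j$ of $M$ depends only on $x_j$, $\det M$ is alternating in $x_1,\dots,x_n$, hence divisible by the Vandermonde $V=\prod_{i<j}(x_i-x_j)=\det[x_j^{\,n-i}]$; thus $\det M/V$ is a symmetric polynomial, and it suffices to show it equals $g_\lambda$.

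Next I would expand the numerator by Cauchy--Binet. Expanding each entry in powers of $x_j$ gives $h_{\lambda_i+n-i}(1^{i-1},x_j)=\sum_{p\ge 0}h_{\lambda_i+n-i-p}(1^{i-1})\,x_j^{\,p}$, so $M=CX$ with $C_{ip}=h_{\lambda_i+n-i-p}(1^{i-1})$ (finitely many nonzero columns) and $X_{pj}=x_j^{\,p}$. Cauchy--Binet writes $\det M$ as a sum over strictly increasing exponent sets $p_1<\dots<p_n$; re-indexing these by partitions $\nu$ through $\{p_s\}=\{\nu_i+n-i\}$ and using the classical bialternant $\det[x_j^{\,\nu_s+n-s}]=s_\nu\cdot V$, the two reversal signs $(-1)^{\binom{n}{2}}$ cancel, yielding
\[
\det M \;=\; V\sum_{\nu}\det\!\bigl[h_{\lambda_i-i+s-\nu_s}(1^{i-1})\bigr]_{1\le i,s\le n}\, s_\nu(x_1,\dots,x_n).
\]

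Then I would compute the Schur expansion of $\det A$ independently. Using the addition-of-alphabets rule $A_{ij}=\sum_{p\ge 0}h_p(1^{i-1})\,h_{\lambda_i-i+j-p}(x_1,\dots,x_n)$ and expanding the determinant as a multilinear function of its rows gives $\det A=\sum_{p_1,\dots,p_n}\bigl(\prod_i h_{p_i}(1^{i-1})\bigr)\det[h_{\lambda_i-p_i-i+j}]_{ij}$; each inner generalized Jacobi--Trudi determinant straightens to $\pm s_\nu$ (or $0$) by the standard rule, and collecting terms produces exactly the same coefficient $\det[h_{\lambda_i-i+s-\nu_s}(1^{i-1})]_{i,s}$ as above. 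Since the $s_\nu(x_1,\dots,x_n)$ with $\ell(\nu)\le n$ are linearly independent, $\det M/V=\det A=g_\lambda$, which is the asserted identity.

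The hard part will be the bookkeeping that makes the two expansions funnel into literally the same coefficient minors: I must match the strictly increasing exponent tuple $p_1<\dots<p_n$ from Cauchy--Binet with the partition $\nu$, track the $(-1)^{\binom{n}{2}}$ reversal sign on both the $C$-factor and the $X$-factor so that they cancel, and align this with the straightening sign arising in the multilinear expansion of $\det A$. Once the index conventions and the vanishing rule $h_{<0}=0$ are fixed consistently, both routes reduce to the single minor $\det[h_{\lambda_i-i+s-\nu_s}(1^{i-1})]$, and the remaining verifications (finiteness of the sums and the padding step) are routine.
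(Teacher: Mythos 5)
Your proof is correct, but it takes a genuinely different route from the paper's. The paper follows the classical Macdonald-style argument (its citation of \cite[(3.6)]{macdonald}) as a direct matrix factorization: with $\alpha_i = \lambda_i + n - i$ it forms $H_\alpha = \left[h_{\alpha_i - n + k}(1^{i-1},\mathbf{x})\right]_{i,k}$ together with the auxiliary matrix $E = \left[(-1)^{n-k} e^{(j)}_{n-k}\right]_{k,j}$ built from elementary symmetric polynomials in the variables with $x_j$ omitted, verifies the generating-function identity $H_i(t)\,E_j(-t) = (1-t)^{1-i}(1-x_j t)^{-1} = \sum_m h_m(1^{i-1},x_j)\,t^m$ to conclude $H_\alpha E = \left[h_{\alpha_i}(1^{i-1},x_j)\right]_{i,j}$, and then simply takes determinants, using $\det E = \prod_{i<j}(x_i-x_j)$ and $\det H_\alpha = g_\lambda$ by \eqref{gh} (padded to size $n$, a routine step the paper leaves implicit but you spell out correctly). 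You instead expand both sides in the Schur basis: Cauchy--Binet plus the classical bialternant for the numerator, and multilinear row expansion plus Jacobi--Trudi straightening for $\det A = g_\lambda$, matching the coefficient of each $s_\nu$; in both computations this coefficient is $\det\left[h_{\lambda_i - i + s - \nu_s}(1^{i-1})\right]_{1\le i,s\le n}$, and your bookkeeping checks out --- the two reversal signs $(-1)^{\binom{n}{2}}$ do cancel, and in the straightening step the permutations $\tau$ pairing $\lambda_i - p_i - i$ with $\nu_{\tau(i)}-\tau(i)$ biject with the nonzero terms, reproducing exactly the expansion of that minor. The trade-off: the paper's factorization is shorter and avoids all straightening and sign bookkeeping, while your route, though heavier, produces a genuine byproduct the paper's proof never exhibits, namely the explicit Schur expansion $g_\lambda = \sum_\nu \det\left[h_{\lambda_i - i + s - \nu_s}(1^{i-1})\right]_{i,s}\, s_\nu$, whose coefficients are determinants of binomial coefficients since $h_m(1^{i-1}) = \binom{m+i-2}{m}$.
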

\begin{proof}
The proof follows along the same lines as the classical derivation that relates Jacobi-Trudi and bialternant formulas for Schur polynomials, see \cite[(3.6)]{macdonald}. 

Let $\mathbf{x} = (x_1,\ldots, x_n),$ $\alpha = (\alpha_1, \ldots, \alpha_n) \in \mathbb{N}^n$ %be a composition 
and consider the matrices 
$$
H_{\alpha} := \left[h_{\alpha_i - n + k}(1^{i-1}, \mathbf{x}) \right]_{1 \le i,k \le n}, \qquad E := \left[(-1)^{n-k} e^{(j)}_{n - k} \right]_{1 \le k,j \le n},
$$
whose entries are given by the following generating series
$$
H_i(t) := \sum_{k = 0}^{\infty} h_{k}(1^{i-1}, \mathbf{x})\, t^k = (1-t)^{1-i} \prod_{k} (1 - x_k t)^{-1}, \quad
%$$
%and 
%$$
E_j(t) := \sum_{\ell=0}^{n-1} e^{(j)}_{\ell} t^{\ell} := \prod_{i \ne j} (1 + x_i t).
$$
Note that we have 
$$
H_i(t) \cdot E_j(-t) = (1-t)^{1-i} (1-x_j t)^{-1} = \sum_{m} h_m(1^{i-1}, x_j) t^m %\varphi^{i-1} x_j^m
$$
and by comparing the coefficients at $[t^{\alpha_i}]$ from both sides of this equality we obtain that 
$$
\sum_{k = 1}^n h_{\alpha_i - n + k}(1^{i-1}, \mathbf{x}) (-1)^{n - k} e^{(j)}_{n - k} = h_{\alpha_i}(1^{i-1}, x_j). %\varphi^{i-1} x_j^{\alpha_i}
$$
Therefore,
$$
H_{\alpha} \cdot E = [h_{\alpha_i}(1^{i-1}, x_j)]_{1 \le i, j \le n}, \qquad \det[H_{\alpha}] \cdot \det[E] = \det[h_{\alpha_i}(1^{i-1}, x_j)]_{1 \le i, j \le n}.
$$
Note that $\det[E] = \prod_{i < j} (x_i - x_j)$ and the sequence $\alpha = (\alpha_1,\ldots, \alpha_n)$ given by $\alpha_i = \lambda_i + n - i$ satisfies $\det[H_{\alpha}] = g_{\lambda}$ by the identity \eqref{gh}, and hence we get the desired formula. 
\end{proof}

%\begin{remark}
%Even though we state the formula for $n \ge \ell(\lambda)$, the definition of $g_{\lambda}(x_1, \ldots, x_n)$ allows to compute the polynomial for $n < \ell(\lambda)$ as well. For example, $g_{\lambda}(x_1) = x_1^{\lambda_1}$.
%\end{remark}

%\section{Some open questions}
%\subsection{Ribbon formulas} 
\begin{remark}[On ribbon formulas]\label{ribbon}
Schur function determinants have a beautiful unifying theory via ribbon decompositions known as the Hamel--Goulden formula \cite{hg}, see also \cite{chen}. It unifies Jacobi--Trudi types, Giambelli hook (see e.g. \cite{macdonald}), and Lascoux--Pragacz \cite{lasp} ribbon formulas. Is there analogous formula for dual Grothendieck polynomials that would generate such identities? Note that there is a nontrivial Giambelli-type identity for $g_{\lambda}$ obtained in \cite{ln}. 
\end{remark}

{%\small
\section*{Acknowledgements}
We are grateful to Askar Dzhumadil'daev, Darij Grinberg, Alejandro Morales, Igor Pak, and Pavlo Pylyavskyy for helpful conversations. We especially thank Darij Grinberg for many useful comments on the paper. We also thank the referee for careful reading of the text and many useful comments. 
}

%\newpage

\end{document}